\newtheorem{thm}{Theorem}[section]
\newtheorem{lem}[thm]{Lemma}
\newtheorem{prop}[thm]{Proposition}
\newtheorem{rem}{Remark}
\newcommand{\p}{\partial}
\newcommand{\N}{\mathbb{N}}
\newcommand{\D}{\Delta}
\newcommand{\vp}{\varphi}
\newcommand{\R}{\mathbb{R}}
\newcommand{\ve}{\varepsilon}
\newcommand{\bve}{{\varepsilon}}
\newcommand{\LL}{\mathcal{L}}
\newcommand{\GG}{\mathcal{G}} 
\newcommand{\g}{\mathfrak g}
\newcommand{\M}{\mathcal{M}}
\begin{document}

\title[Singular Yamabe metrics by equivariant reduction]{Singular Yamabe metrics by equivariant reduction}

\author{Ali Hyder}
\address[Ali Hyder]{ Department of Mathematics,
Johns Hopkins University,
3400 N. Charles Street,
Baltimore, MD 21218}
\email{ahyder4@jhu.edu}

\author{Angela Pistoia}
\address[Angela Pistoia]{Dipartimento SBAI,  ``Sapienza" Universit\`a di Roma, via Antonio Scarpa 16, 00161 Roma, Italy}
\email{angela.pistoia@uniroma1.it}

\author{Yannick Sire} \address[Yannick Sire]
{ Department of Mathematics,
Johns Hopkins University,
3400 N. Charles Street,
Baltimore, MD 21218}
\email{sire@jhu.edu}
\begin{abstract}
We construct singular solutions to the Yamabe equation using a reduction of the problem in an equivariant setting. This provides a non-trivial geometric example for which the analysis is simpler than in Mazzeo-Pacard program.  Our construction provides also a non-trivial example of a weak solution to the Yamabe problem involving an equation with (smooth) coefficients.  

\end{abstract}

\date{\today}
\subjclass[2010]{Primary: 35J60, Secondary: 35C20, 58J60}
\keywords{Singular solution, Yamabe problem, warped product manifold, equivariant solution}
\thanks{A. Hyder was supported by SNSF Grant No. P400P2-183866.
A. Pistoia was partially supported  by project Vain-Hopes within the program VALERE: VAnviteLli pEr la RicErca.}
\maketitle

\tableofcontents

 \section{Introduction }

We consider the semilinear elliptic equation
\begin{equation}\label{p}
 -\Delta _\g u+h u=u^{p},\ u>0,\ \hbox{on}\ (\M,\g)
 \end{equation}
where   $(\M,\g)$  is a $n-$dimensional compact Riemannian manifold without boundary, $h$ is a $C^1-$real function on $\M$   s.t. $-\Delta_\g+h$  is coercive
and $p>1$.

We are interested in finding solutions which are singular at $k-$dimensional manifolds for some integer $k\ge1 .$
\\  

 In the critical case, i.e.  $p=2^*_n-1:={n+2\over n-2}$ when the equation \eqref{p} coincides with the Yamabe equation (for $h=R_\g$ the scalar curvature of $\M$), solutions singular at isolated points and at $k-$dimensional manifolds are known provided $k<(n-2)/2$ (see \cite{MPDuke,Mazzeo-Smale,Mazzeo-Pacard96,schoenCPAM}). 
 
 In the present work, we provide a non trivial example of a geometric singular solution, in a much simpler setting than the original construction in \cite{Mazzeo-Pacard96}. Our idea is to rely on an equivariant reduction of the problem like the ones described for instance in \cite{Clapp-Pistoia}.

For any integer $0\le k\le n-3 $ let $2^*_{n,k}={2(n-k)\over n-k-2} $  be the \textit{$(k+1)-$st critical exponent}.
We remark that $2^*_{n,k}=2^*_{n-k,0}$   is nothing but the critical exponent for the Sobolev embedding  ${\mathrm H}^ 1_\g(\M)\hookrightarrow {\mathrm L}^{q}_\g(\M),$  when $(\M,\g)$ is a $(n-k)-$dimensional Riemannian manifold.
In particular,  $ 2^*_{n,0}={2 n\over n- 2}$    is the usual Sobolev critical exponent. 

In order to reduce the problem, we will consider the background manifold $\M$ to be given by a warped product. Let $(M,g)$ and $(K,\kappa)$ be two riemannian manifolds of dimensions $N$ and $k ,$ respectively. Let   $\omega\in C^2(M),$ $\omega> 0$ be a given function.  The warped product $\M = M\times _\omega K$ is the product (differentiable) $n-$dimensional ($n=N+k$) manifold
$M\times K$ endowed with the riemannian   metric $\g=g +\omega^2\kappa.$  The function $\omega$ is called the {\em warping function}.
For example, every surface of revolution (not crossing the axis of revolution) is
isometric to a warped product, with $M$ the generating curve, $K=S^1$ and $\omega(x)$ the distance from $x \in M$ to the axis of revolution.
\\
It is not difficult to check that if $u\in C^2(M\times _\omega K)$   then
\begin{equation}\label{equ2}\Delta _\g u=\Delta _{g } u +\frac{m}{\omega}g(\nabla_g u \nabla _g u)+{1\over \omega^2}\Delta _{\kappa} u.\end{equation}
Assume $h$ is invariant with respect to $K,$ i.e. $h(x,y)=h(x)$ for any $(x,y)\in M\times K.$
If we look for solutions to \eqref{p} which are invariant with respect to $K,$ i.e. $u(x,y)=v(x)$ then by \eqref{equ2}
we immediately deduce that $u$ solves \eqref{p} if and only if $v$ solves
\begin{equation}\label{equ3}
-\Delta _{g } v -{m \over \omega}g \ (\nabla _{g } v, \nabla  _{g } v\ )+h v=v^{p} \quad\hbox{in}\ (M,g ). 
\end{equation}
or equivalently
$$- \mathrm{div}_{g }\ ( {\omega^N}\nabla _{g } v\ )+ {\omega^N}h v ={\omega^N}v^{p},\ v>0\quad \hbox{in}\ (M,g ).$$
 It is clear that if $v$ is a solution to problem \eqref{equ3} which is singular at a point $\xi_0\in M$ then $u(x,y)=v(x)$ is a solution to problem  \eqref{p}
which is singular only on the fiber $\{\xi_0\}\times K$, which is a $k -$dimensional submanifold of $M\times _\omega K.$
It is important to notice that the fiber $\{\xi_0\}\times K$ is   totally geodesic in $ M\times _\omega K$ (and in particular a minimal submanifold of $ M\times _\omega K$) if $\xi_0$ is a critical point of the warping function $\omega.$\\

Therefore, we are lead to consider the more general anisotropic   problem
\begin{equation}\label{equ5}
- \mathrm{div}_{g }\ ( a\nabla _{g } u\ )+ ah u =au^{p},\ u>0\quad \hbox{in}\ (M,g )\end{equation}
where $(M,g)$ is a $N-$dimensional compact Riemannian manifold, $p>1,$ $h\in C^1(M)$ and  $a\in C^2(M)$ with $\min_M a>0$. We will assume that the anisotropic operator $- \mathrm{div}_{g }\ ( a\nabla _{g } u\ )+ ah u $ is coercive in $H^1 (M).$ 
Our main result reads as follows.
\begin{thm} \label{thm-0} If $\frac{N}{N-2}<p<\frac{N+2}{N-2}$, then the problem \eqref{equ5} has a solution which is singular at a  point $\xi_0\in M$. \end{thm}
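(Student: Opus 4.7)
The plan is a perturbative construction anchored on the explicit Emden--Fowler singular profile of the flat Euclidean equation. Setting $\alpha:=\tfrac{2}{p-1}$, the hypothesis $\tfrac{N}{N-2}<p<\tfrac{N+2}{N-2}$ translates into $\tfrac{N-2}{2}<\alpha<N-2$; in this range the radial function
$$
u_s(y):= c_p\,|y|^{-\alpha},\qquad c_p^{p-1}=\alpha(N-2-\alpha)>0,
$$
is a positive distributional solution of $-\Delta u=u^p$ on $\R^N$. I would pick any $\xi_0\in M$, fix geodesic normal coordinates $\exp_{\xi_0}$ on a small ball $B_{2\rho}(\xi_0)\subset M$, together with a smooth cutoff $\chi$ equal to $1$ on $B_\rho(\xi_0)$ and supported in $B_{2\rho}(\xi_0)$, and use as approximate solution
$$
U(x):=\chi(x)\,u_s\bigl(\exp_{\xi_0}^{-1}(x)\bigr);
$$
a true solution of \eqref{equ5} is then sought in the form $u=U+\vp$, with $\vp$ strictly less singular than $U$ at $\xi_0$.

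Substituting the ansatz yields the correction equation $L\vp=\mathcal{E}+\mathcal{N}(\vp)$, where $L\vp:=-\mathrm{div}_g(a\nabla\vp)+ah\vp-p a U^{p-1}\vp$ is the linearization at $U$, $\mathcal{E}:=aU^p+\mathrm{div}_g(a\nabla U)-ahU$ is the residual of $U$ (supported in $B_{2\rho}(\xi_0)$), and $\mathcal{N}(\vp):=a[(U+\vp)^p-U^p-pU^{p-1}\vp]$ is the super-linear remainder. Because $u_s$ solves the Euclidean equation exactly and the data $g,a,h$ are smooth, $\mathcal{E}$ has strictly higher vanishing order at $\xi_0$ than $aU^p$; the elementary inequality valid for $p>1$ provides the quadratic bound $|\mathcal{N}(\vp)|\leq C(U^{p-2}\vp^2+|\vp|^p)$ on a small ball. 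I would then set up weighted H\"older spaces $C^{k,\beta}_\delta$ encoding the rate $d_g(x,\xi_0)^\delta$ at the singularity, invert $L$ as a map $C^{2,\beta}_\delta\to C^{0,\beta}_{\delta-2}$, and close the fixed point problem $\vp\mapsto L^{-1}(\mathcal{E}+\mathcal{N}(\vp))$ by Banach contraction on a small ball, producing $\vp$ and hence a positive singular solution $u=U+\vp$.

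The main obstacle, by far, is the invertibility of $L$ on these weighted spaces. Near $\xi_0$, $L$ is a smooth perturbation of the scale-invariant Hardy-type model
$$
L_0:=-\Delta-\frac{p\,c_p^{p-1}}{|y|^2},
$$
whose mapping behaviour $C^{2,\beta}_\delta\to C^{0,\beta}_{\delta-2}$ is governed by the indicial roots $\gamma_\pm$ of $\gamma^2+(N-2)\gamma+pc_p^{p-1}=0$, combined with the eigenvalues of the spherical Laplacian. One has to select a weight $\delta$ lying in the Fredholm window of $L_0$, compatible with the singular profile of $U$, and small enough for the residual $\mathcal{E}$ to fall into $C^{0,\beta}_{\delta-2}$. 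Globalizing $L_0^{-1}$ to a bounded right inverse of $L$ on $M$ then demands a parametrix gluing with the coercivity of $-\mathrm{div}_g(a\nabla\cdot)+ah$ on the complement of a neighbourhood of $\xi_0$, followed by a Fredholm-alternative argument to eliminate a possible finite-dimensional cokernel (or, failing that, to project it out and recover a solution by perturbing $\xi_0$ or by adjusting a scaling parameter in $U$). Verifying that the subcritical range $\tfrac{N}{N-2}<p<\tfrac{N+2}{N-2}$ really opens up a nonempty Fredholm window for $\delta$, and closing the quantitative weighted estimates on $\mathcal{E}$ and $\mathcal{N}$, is the crux of the argument; once those are in place the contraction step runs routinely.
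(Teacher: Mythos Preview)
Your architecture---approximate solution, linearisation, weighted H\"older spaces, contraction---is exactly the paper's, but a key mechanism is missing and the argument, as written, will not close.

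The paper does \emph{not} build the approximate solution from the scale--invariant power $u_s(y)=c_p|y|^{-\alpha}$. It uses instead the Mazzeo--Pacard radial solution $u_1$ of $-\Delta u=u^p$ on $\R^N\setminus\{0\}$, which satisfies $u_1(r)\sim c_pr^{-\alpha}$ as $r\to0$ but $u_1(r)\sim\beta r^{2-N}$ as $r\to\infty$, and then sets $\bar u_\ve(x)=\chi(x)\,\ve^{-\alpha}u_1\bigl((x-\xi_0)/\ve\bigr)$. Because $u_1$ decays at infinity, $\bar u_\ve$ carries a genuine free parameter: for $|x-\xi_0|\ge\delta$ one has $\bar u_\ve=O(\ve^{N-2-\alpha})$, so the cutoff annulus contributes only $O(\ve^{N-2-\alpha})$ to the residual, and the paper obtains $\|f_\ve\|_{C^{0,\alpha}_{\nu-2}}\le C\ve^{q}$ with $q>0$. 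That $\ve$--smallness, together with a right inverse $G_\ve$ of $L_\ve$ whose norm is bounded \emph{uniformly} in $\ve$ (obtained via injectivity on the dual weight and an $L_\ve^*\circ(L_\ve L_\ve^*)^{-1}$ construction, not by a parametrix--plus--cokernel scheme), is precisely what makes the contraction close on a ball of radius $M\ve^{q}$.

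Your profile $u_s=c_p|y|^{-\alpha}$ is exactly scale invariant, $\lambda^{-\alpha}u_s(y/\lambda)=u_s(y)$, so rescaling produces nothing new and there is no parameter to send to $0$. The residual $\mathcal{E}$ is then a fixed function; it is true that its leading term $\nabla a\cdot\nabla U=O(|y|^{-\alpha-1})$ is one order better than $aU^p$, but that only puts $\mathcal{E}$ into $C^{0,\beta}_{\delta-2}$---it does not make $\|\mathcal{E}\|$ small. Worse, on the transition annulus $B_{2\rho}\setminus B_\rho$ you pick up $u_s\Delta\chi+2\nabla\chi\cdot\nabla u_s$, whose weighted norm scales like $\rho^{-\delta-\alpha}$ and blows up as $\rho\to0$ since you need $\delta>-\alpha$. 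Consequently there is no way to force $\|L^{-1}\|\cdot\|\mathcal{E}\|$ below the contraction threshold, and the fixed point step does not go through. The remedy is exactly the paper's: replace $u_s$ by the Mazzeo--Pacard profile with $r^{2-N}$ decay and reinstate the scaling parameter $\ve$.
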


As a consequence of the previous theorem and the above discussion, we deduce
 
 \begin{thm} \label{thm-2} 
 Assume that $(\mathcal M, \g)$ is a warped product $M \times_\omega K.$ If $0< k<{n-2\over2}$ then there exists a    solution invariant with respect to $K$ of 
$$ -\Delta _\g u+R_\g u=u^{n+2 \over n-2},\ u>0,\ \hbox{in}\ (\M,\g)$$
 which is singular on $\left \{ \xi_0 \right \}  \times K$, where $\xi_0$  is any point on $M$. Furthermore, if $\xi_0$ is a critical point of $\omega$ then the submanifold $\left \{ \xi_0 \right \}  \times K$ is minimal in $\mathcal M$.
 \end{thm}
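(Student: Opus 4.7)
The plan is to recognize the $K$-invariant Yamabe problem on $\M=M\times_\omega K$ as a special case of the anisotropic equation \eqref{equ5} on the base $(M,g)$, and then apply Theorem~\ref{thm-0}. Write $N=\dim M$, so that $n=N+k$. The O'Neill formula for the scalar curvature of a warped product shows that the $y$-dependence of $R_\g(x,y)$ enters only through $R_\kappa(y)$, so $R_\g$ descends to a function $h\in C^1(M)$ whenever $(K,\kappa)$ has constant scalar curvature. The ansatz $u(x,y)=v(x)$ together with \eqref{equ2} then converts the Yamabe equation into the divergence-form anisotropic equation
\[
-\mathrm{div}_g\bigl(\omega^{k}\nabla_g v\bigr)+\omega^{k}\,h\,v=\omega^{k}\,v^{\frac{n+2}{n-2}}\qquad\text{in }(M,g),
\]
which is exactly \eqref{equ5} with $a=\omega^{k}\in C^2(M)$, $\min_M a>0$, and $p=\tfrac{n+2}{n-2}$.

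Next I verify the remaining hypotheses of Theorem~\ref{thm-0}. Coercivity of $-\mathrm{div}_g(a\nabla_g\cdot)+ah$ on $H^1(M)$ inherits from coercivity of $-\Delta_\g+R_\g$ on $H^1(\M)$ by Fubini: for a $K$-invariant test $\varphi(x,y)=\psi(x)$, the $\g$-Dirichlet form equals $\mathrm{vol}_\kappa(K)\,\int_M\omega^{k}\bigl(|\nabla_g\psi|_g^2+h\psi^2\bigr)\,dV_g$. The exponent condition is also automatic: cross-multiplying shows that $\tfrac{N}{N-2}<\tfrac{n+2}{n-2}$ is equivalent to $N>\tfrac{n+2}{2}$, i.e.\ $k<\tfrac{n-2}{2}$, while $\tfrac{n+2}{n-2}<\tfrac{N+2}{N-2}$ is equivalent to $N<n$, i.e.\ $k\ge1$. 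Hence the standing assumption $0<k<\tfrac{n-2}{2}$ places the Yamabe exponent precisely in the admissible range. Theorem~\ref{thm-0} then produces, for any prescribed $\xi_0\in M$, a positive $v$ singular at $\xi_0$, and $u(x,y):=v(x)$ is the required $K$-invariant weak solution of the Yamabe equation, singular precisely on $\{\xi_0\}\times K$.

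For the final minimality assertion I appeal to the classical structural result for warped products: the second fundamental form of a fibre $\{x\}\times K\subset M\times_\omega K$ is (up to sign) the pull-back of $\nabla_g\omega(x)/\omega(x)$, so if $\xi_0$ is a critical point of $\omega$ the fibre $\{\xi_0\}\times K$ is totally geodesic, hence minimal. All the serious analytic work has been absorbed into Theorem~\ref{thm-0}; the only step in the present deduction that requires any care is the coercivity transfer, which however reduces to the Fubini identity above once $R_\g$ has been shown to be $K$-invariant.
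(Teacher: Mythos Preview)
Your proof is correct and follows exactly the route the paper intends: the paper states Theorem~\ref{thm-2} as an immediate consequence of Theorem~\ref{thm-0} together with the equivariant reduction described in the introduction, and you have simply written out that deduction explicitly, including the check that $0<k<\tfrac{n-2}{2}$ is equivalent to $\tfrac{N}{N-2}<\tfrac{n+2}{n-2}<\tfrac{N+2}{N-2}$. Your added observations---that $K$-invariance of $R_\g$ requires $R_\kappa$ constant, and that coercivity on the base follows from coercivity on $\M$ via the Fubini identity for $K$-invariant test functions---are points the paper leaves implicit but which are indeed needed for the reduction to go through.
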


The proof of Theorem \eqref{thm-0} follows the same strategy developed in \cite{Mazzeo-Pacard96}. In particular, we will replace the $N-$dimensional manifold $M$ by a bounded smooth domain $\Omega$ in $\mathbb R^N$ and we will focus on the Dirichlet boundary problem
 \begin{align}\label{eq-1} \left\{\begin{array}{ll}-div(a\nabla u) +ah u=au^p&\quad\text{in }\Omega \\ u=0&\quad\text{on }\partial\Omega\\ u>0&\quad\text{in }\Omega.\end{array}\right.\end{align} Here  $h\in C^1(M)$, $a\in C^2(\bar\Omega)$ with $\min_\Omega a>0$ and the anisotropic operator $ -div(a\nabla u) +ah u$ is coercive in $H^1_0(\Omega)$. We will show the following result
 \begin{thm} \label{thm-1} If $\frac{N}{N-2}<p<\frac{N+2}{N-2}$, then the problem \eqref{eq-1} has a solution which is singular at a  point $\xi_0\in\Omega$. \end{thm}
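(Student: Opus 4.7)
The plan is to follow the Mazzeo--Pacard strategy, reduced to Euclidean space with the anisotropic weight, via a fixed point argument for a correction built on top of an explicit singular model.

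First I would construct the singular profile. The equation $-\Delta u = u^p$ on $\mathbb R^N\setminus\{0\}$ has the explicit radial singular solution $u_s(x) = A_p |x|^{-2/(p-1)}$ with $A_p^{p-1} = \frac{2}{p-1}\bigl(N-2-\frac{2}{p-1}\bigr)$; the constant $A_p$ is real and positive precisely because $p > N/(N-2)$, while the condition $p<(N+2)/(N-2)$ translates into $2/(p-1) > (N-2)/2$, which places $u_s$ outside $H^1_{\loc}$ (so the solution is genuinely singular) and, more importantly, gives enough room in the indicial analysis below. After rewriting \eqref{eq-1} as $-\Delta u - \nabla\log a\cdot\nabla u + h u = u^p$, the principal part near $\xi_0$ is the flat Laplacian, so the natural local model centered at $\xi_0$ is the rescaled profile $W_0(x) = a(\xi_0)^{-1/(p-1)} A_p |x-\xi_0|^{-2/(p-1)}$, with the lower-order term $\nabla\log a \cdot \nabla u$ and $hu$ treated as perturbations.

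Next I would build a global approximate solution $W$ by gluing $W_0$ (multiplied by a cutoff supported in a small ball $B_{r_0}(\xi_0)$) to a \emph{regular} positive solution of an auxiliary Dirichlet problem on $\Omega\setminus B_\varepsilon(\xi_0)$ with matching boundary data on $\partial B_\varepsilon(\xi_0)$; such a regular solution is produced by standard variational methods in the subcritical regime using the coercivity of $-\textrm{div}(a\nabla\cdot)+ah$. A direct computation shows that the resulting error $E(W) := -\textrm{div}(a\nabla W)+ahW - aW^p$ is supported in the annular gluing region and can be made small in the weighted norm introduced below by choosing the gluing scale appropriately.

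The heart of the argument is the linear theory for
\[
L\phi := -\textrm{div}(a\nabla \phi) + ah\phi - p a W^{p-1}\phi,
\]
set in weighted Hölder spaces $C^{k,\alpha}_\delta$ with weight $|x-\xi_0|^{\delta}$ and zero boundary data on $\partial\Omega$. Near the singular point, $L$ looks like $-a(\xi_0)\bigl(\Delta + pA_p^{p-1}|x-\xi_0|^{-2}\bigr)$, which on each spherical harmonic mode $j$ becomes an Euler equation with indicial roots
\[
\delta^{\pm}_j = -\tfrac{N-2}{2}\pm\sqrt{\bigl(\tfrac{N-2}{2}\bigr)^2 + j(j+N-2) - p A_p^{p-1}}.
\]
Subcriticality $p<(N+2)/(N-2)$ is exactly what makes the interval $(\delta_0^-, \delta_0^+)$ nonempty and \emph{avoids} the exponent $-2/(p-1)$ of the profile itself, so one can pick a weight $\delta$ in that interval for which $L$, viewed as a bounded operator on the weighted spaces, is an isomorphism: Fredholm theory in conical weighted spaces plus global coercivity away from $\xi_0$ yields both injectivity and surjectivity, and the smooth lower-order perturbations coming from $a$ and $h$ are relatively compact. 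This invertibility step is the main obstacle, since the quantitative estimate $\|\phi\|_{k,\alpha,\delta}\le C\|L\phi\|_{k-2,\alpha,\delta-2}$ must be uniform under the perturbation $W^{p-1}\to u_s^{p-1}$ and under the gluing parameter.

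With the linear inverse in hand, I would write the equation for $\phi = u - W$ as $\phi = L^{-1}(E(W) + Q(\phi))$, where $Q(\phi) = a\bigl((W+\phi)^p - W^p - pW^{p-1}\phi\bigr)$. The nonlinear remainder is quadratic in $\phi$ and satisfies $|Q(\phi)|\lesssim W^{p-2}\phi^2$ near $\xi_0$, which is controlled in the weighted norm by $\|\phi\|^2_{k,\alpha,\delta}$ precisely because of the weight choice allowed by subcriticality. A contraction mapping argument in a small ball of the weighted space then produces a unique small solution $\phi$, and positivity of $u = W+\phi$ follows from positivity of $W$, the smallness of $\phi$ in the sup norm away from $\xi_0$, and the strong maximum principle. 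This yields the singular positive solution claimed in Theorem \ref{thm-1}.
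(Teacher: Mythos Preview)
Your overall architecture (approximate singular profile, linear analysis in weighted H\"older spaces, contraction) is the right one and matches the paper. But the specific implementation you sketch differs from the paper's in a way that creates a genuine gap.

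\textbf{The profile and the small parameter.} You build the ansatz from the exact scale--invariant power $W_0(x)=A_p|x-\xi_0|^{-2/(p-1)}$ and then try to manufacture smallness by gluing to an exterior regular solution. The paper does something different and essential: it uses the Fowler--type radial solution $u_1$ of $-\Delta u=u^p$ on $\R^N\setminus\{0\}$ which is asymptotic to $c_p r^{-2/(p-1)}$ at the origin \emph{but decays like $r^{2-N}$ at infinity}, and then sets $\bar u_\ve(x)=\chi(x)\,\ve^{-2/(p-1)}u_1((x-\xi_0)/\ve)$. The point is that for $|x-\xi_0|\gg\ve$ one has $\bar u_\ve(x)\approx \ve^{N-2-\frac{2}{p-1}}|x-\xi_0|^{2-N}\to 0$; this decay is what forces the error $f_\ve$ to be $O(\ve^q)$ in $C^{0,\alpha}_{\nu-2}$ (Lemma~\ref{error}) and what makes the linearised operator $L_\ve$ converge, away from $\xi_0$, to the coercive operator $-\mathrm{div}(a\nabla\cdot)+ah$. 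With the pure power $W_0$ there is no such parameter: the contribution $\nabla a\cdot\nabla W_0-ahW_0$ to the error is supported on all of $B_{r_0}(\xi_0)$, not on an annulus, and a direct cutoff of $W_0$ produces an annular error whose weighted norm is of order $r_0^{-\delta-2/(p-1)}$, which cannot be made small for any $\delta>-2/(p-1)$. Your proposed cure, namely first solving the full nonlinear problem on $\Omega\setminus B_\varepsilon(\xi_0)$ with boundary data $W_0|_{\partial B_\varepsilon}$, is not ``standard variational methods'': the inner datum is of size $\varepsilon^{-2/(p-1)}$, the resulting $\psi$ need not match $W_0$ in $C^1$ across $\partial B_\varepsilon$, and in any case the potential $p\,aW^{p-1}$ in your linearised operator then stays $O(1)$ away from $\xi_0$, so coercivity and uniform invertibility are lost.

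\textbf{The linear theory and the choice of weight.} You assert that $L$ is an \emph{isomorphism} on $C^{2,\alpha}_\delta$ for a single weight $\delta\in(\delta_0^-,\delta_0^+)$. Two problems: first, for $p$ close to $(N+2)/(N-2)$ one has $(N-2)^2<4p\,k(p,N)$ and the indicial roots $\gamma_0^\pm$ are complex conjugates with common real part $(2-N)/2$, so that interval is empty. Second, the paper does not work at a single weight. It fixes $\nu$ with $-\tfrac{2}{p-1}<\nu<\min\{1-\tfrac{2}{p-1},\,\Re\gamma_0^-\}$ (so \emph{below} $\Re\gamma_0^-$), proves injectivity of $L_\ve$ at the dual weight $\mu=2-N-\nu>\Re\gamma_0^+$ (Lemma~\ref{inj-omega}), and then obtains a \emph{right inverse} $G_\ve:C^{0,\alpha}_{\nu-2}\to C^{2,\alpha}_{\nu,\mathcal D}$ by the duality $L_\ve\circ L_\ve^*$ on weighted $L^2$ spaces. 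The uniformity of $\|G_\ve\|$ as $\ve\to 0$ is the delicate step, and it relies on blow--up arguments that reduce to the injectivity statements Propositions~\ref{inj-prop-1} and~\ref{inj-prop-2} for $L_1=\Delta+pu_1^{p-1}$ on $\R^N\setminus\{0\}$; these in turn use the precise asymptotics of $u_1$ both at $0$ and at $\infty$.

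In short: replace your $W_0$ by the scaled Fowler solution $\bar u_\ve$, drop the exterior nonlinear solve, and run the linear theory with the pair of weights $(\nu,\mu)$ and the duality/right--inverse construction rather than a single isomorphism claim. (A minor point: since the $a$'s cancel in $-\mathrm{div}(a\nabla u)=au^p$ at leading order, no factor $a(\xi_0)^{-1/(p-1)}$ is needed in the model profile.)
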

 The modification in the arguments to solve the problem on the manifold instead  of in the domain are minor and are described in the last section of \cite{Mazzeo-Pacard96}.\\
 
 The paper is organized as follows. The proof of Theorem \ref{thm-1} is carried out in Section \ref{proof} and  relies on  the linear theory studied in Section \ref{lin-th}
 together with a contraction mapping argument developed in Section \ref{con-q}. All the necessary technical tools are contained in Section   \ref{pre}
 and in the Appendix \ref{app}.

 \section{Preliminaries}\label{pre}

\subsection{Function spaces}

For $\sigma>0$ we let $N_\sigma$ to be the ball $B_\sigma(\xi_0)$. 
 For $\alpha\in(0,1)$, $s\in (0,\sigma)$, $k\in \N\cup\{0\}$ and $\nu\in\R$ we define the seminorms \begin{align} \label{seminorm}|w|_{k,\alpha,s}:=\sum_{j=0}^k s^j\sup_{N_s\setminus N_\frac s2}|\nabla ^jw| +s^{k+\alpha}\sup_{x,x'\in N_s\setminus N_\frac s2} \frac{|\nabla^kw(x)-\nabla^kw(x')|}{|x-x'|^\alpha}, \end{align} and  the weighted H\"older norm  ($\sigma>0$ is fixed) \begin{align} \notag \|w\|_{C^{k,\alpha}_\nu}  :=|w|_{C^{k,\alpha}(\bar\Omega\setminus N_\frac\sigma2)} +\sup_{0<s<\sigma}s^{-\nu}|w|_{k,\alpha,s}.\end{align} The  weighted H\"older  space $C^{k,\alpha}_\nu(\Omega\setminus\Sigma)$ is defined by (here $\Sigma=\{\xi_0\}$) \begin{align} \notag C^{k,\alpha}_\nu(\Omega\setminus\Sigma):=\left\{w\in C^{k,\alpha}_{loc}(\bar \Omega\setminus\Sigma): \|w\|_{C^{k,\alpha}_\nu} <\infty \right\}. \end{align}  The subspace of $C^{k,\alpha}_\nu(\Omega\setminus\Sigma)$ with Dirichlet boundary conditions will be denoted by $$C^{k,\alpha}_{\nu,\mathcal {D}}(\Omega\setminus\Sigma):=\{w\in C^{k,\alpha}_\nu(\Omega\setminus\Sigma):w=0\text{ on }\partial\Omega\}.$$
The space $C^{k,\alpha}_{\nu,\nu'}(\R^N\setminus \{0\})$ is defined by  \begin{align}\notag \|w\|_{C^{k,\alpha}_{\nu,\nu'}(\R^N\setminus \{0\})}:=\|w\|_{C^{k,\alpha}_{\nu}(B_2\setminus \{0\})} +\sup _{r\geq 1}(r^{-\nu'} \|w(r\cdot)\|_{C^{k,\alpha}(\bar B_2\setminus B_1)}).\end{align}  

We now list some useful properties of the space $C^{k,\alpha}_{\nu}(\Omega\setminus\Sigma)$, see e.g. \cite{Mazzeo-Pacard96} and the book \cite{Pacard-Riviere}.

\begin{lem} \label{lem-properties}The following properties hold. \begin{itemize} \item[i)] If $w\in C^{k+1,\alpha}_{\gamma}(\Omega\setminus\Sigma)$ then   $\nabla w\in C^{k,\alpha}_{\gamma-1}(\Omega\setminus\Sigma)$. 
\item[ii)] If  $w\in C^{k+1,0}_{\gamma}(\Omega\setminus\Sigma)$ then   $ w\in C^{k,\alpha}_{\gamma}(\Omega\setminus\Sigma)$ for every $\alpha\in[0,1)$. 
\item[iii)]  For every $w_i\in C^{k,\alpha}_{\gamma_i}(\Omega\setminus\Sigma)$,\, i=1,2, we have 
 $$\|w_1 w_2\|_{k, \gamma_1+\gamma_2,\alpha}\leq C \|w_1 \|_{k, \gamma_1,\alpha} \|w_2\|_{k, \gamma_2,\alpha},$$ for some $C>0$ independent of $w_1,w_2$.
\item[iv)] There exists $C>0$ such that for every   $w\in C^{k,\alpha}_{\gamma}(\Omega\setminus\Sigma)$   with $w>0$ in $\bar \Omega\setminus \Sigma$ we have  $$\|w^p\|_{k, \gamma,\alpha}\leq C \|w \|^p_{k, \gamma,\alpha} .$$

 \end{itemize}  \end{lem}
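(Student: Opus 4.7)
The plan is to reduce all four statements to the classical corresponding facts for standard Hölder spaces on a fixed annulus, via a scaling trick. The key observation is that for each $s\in(0,\sigma)$ the seminorm $|\cdot|_{k,\alpha,s}$ is nothing but the standard $C^{k,\alpha}$-norm of an $s$-rescaled function. Concretely, centering at $\xi_0$ and setting $\tilde w(y):=w(sy)$ on the fixed annulus $A:=B_1\setminus B_{1/2}$, the chain rule gives $\nabla^j\tilde w(y)=s^j(\nabla^j w)(sy)$, and hence
\[
\|\tilde w\|_{C^{k,\alpha}(A)}=|w|_{k,\alpha,s}.
\]
With this, the weighted norm on the region near $\Sigma$ becomes $s^{-\nu}\|\tilde w\|_{C^{k,\alpha}(A)}$, uniformly in $s$. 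The part $|\cdot|_{C^{k,\alpha}(\bar\Omega\setminus N_{\sigma/2})}$ is a classical Hölder norm on a fixed compact set, so all properties follow from the usual facts there; I focus on the weighted annular part.

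For (i), note that $\widetilde{\nabla w}(y)=(\nabla w)(sy)=s^{-1}\nabla\tilde w(y)$, so $\|\widetilde{\nabla w}\|_{C^{k,\alpha}(A)}=s^{-1}\|\nabla\tilde w\|_{C^{k,\alpha}(A)}\le s^{-1}\|\tilde w\|_{C^{k+1,\alpha}(A)}$; multiplying by $s^{-(\gamma-1)}$ yields $s^{-(\gamma-1)}|\nabla w|_{k,\alpha,s}\le s^{-\gamma}|w|_{k+1,\alpha,s}$. For (ii), I use the elementary interpolation on $A$: if $\tilde w\in C^{k+1,0}(A)$, then each derivative $\nabla^k\tilde w$ is Lipschitz and for any $\alpha<1$ and $y,y'\in A$,
\[
\frac{|\nabla^k\tilde w(y)-\nabla^k\tilde w(y')|}{|y-y'|^\alpha}\le |y-y'|^{1-\alpha}\,\|\nabla^{k+1}\tilde w\|_{L^\infty(A)}\le C\,\|\tilde w\|_{C^{k+1,0}(A)},
\]
which rescales to $|w|_{k,\alpha,s}\le C|w|_{k+1,0,s}$. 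For (iii), the classical product rule on $A$, $\|\tilde w_1\tilde w_2\|_{C^{k,\alpha}(A)}\le C\|\tilde w_1\|_{C^{k,\alpha}(A)}\|\tilde w_2\|_{C^{k,\alpha}(A)}$, immediately gives $|w_1w_2|_{k,\alpha,s}\le C|w_1|_{k,\alpha,s}|w_2|_{k,\alpha,s}$, and multiplying by $s^{-\gamma_1}\cdot s^{-\gamma_2}$ produces the weighted bound with exponent $\gamma_1+\gamma_2$.

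For (iv), I would apply Faà di Bruno's formula to $w^p$: derivatives up to order $k$ are polynomials of the form $w^{p-m}\prod_\ell \nabla^{i_\ell}w$ with $\sum i_\ell\le k$ and $m$ the number of factors. Using positivity of $w$ and the standard $C^{k,\alpha}(A)$-product estimates on $\tilde w$, one bounds $\|(\tilde w)^p\|_{C^{k,\alpha}(A)}\le C\|\tilde w\|_{C^{k,\alpha}(A)}^p$ (here the multiplicative constant absorbs the fixed compact scale of $A$ and harmless powers of $\sigma$ coming from bounded factors $s^{(p-1)\gamma}$). Rescaling back and combining with the already-classical estimate on $\bar\Omega\setminus N_{\sigma/2}$ gives $\|w^p\|_{C^{k,\alpha}_\gamma}\le C\|w\|_{C^{k,\alpha}_\gamma}^p$. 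The main obstacle is precisely this last point: for non-integer $p$, controlling the lower powers $w^{p-m}$ uniformly on each dyadic annulus is more delicate than the clean algebra used in (i)–(iii) and requires care with the weight exponent (essentially one must ensure $(p-1)\gamma$ stays bounded for $s\in(0,\sigma)$, which is built into the statement).
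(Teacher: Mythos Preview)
The paper does not actually prove this lemma; it simply states it and refers to \cite{Mazzeo-Pacard96} and \cite{Pacard-Riviere}. Your rescaling reduction---identifying $|w|_{k,\alpha,s}$ with the ordinary $C^{k,\alpha}$ norm of $\tilde w(y)=w(sy)$ on the fixed annulus $A=B_1\setminus B_{1/2}$---is precisely the device used in those references, and your arguments for (i)--(iii) are correct as written.

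For (iv) your caution is well placed, and in fact the obstacle you flag points to a defect in the \emph{statement} rather than in your method. First, the weight on the left should be $p\gamma$, not $\gamma$: your scaling identity yields
\[
s^{-p\gamma}\|\tilde w^p\|_{C^{k,\alpha}(A)}\le C\bigl(s^{-\gamma}\|\tilde w\|_{C^{k,\alpha}(A)}\bigr)^p,
\]
which is the clean inequality, whereas with weight $\gamma$ one is left with the factor $s^{(p-1)\gamma}$ that blows up as $s\to0$ whenever $\gamma<0$ (e.g.\ $w=|x|^{-1}$, $p=2$ already fails). This is how the corresponding property is stated in the cited references. Second, even with the corrected weight, the annulus bound $\|\tilde w^p\|_{C^{k,\alpha}(A)}\le C\|\tilde w\|_{C^{k,\alpha}(A)}^p$ for $k\ge2$ and $1<p<2$ needs a uniform lower bound on $\tilde w$ (equivalently $w\gtrsim\rho^\gamma$), since terms like $\tilde w^{\,p-2}|\nabla\tilde w|^2$ appear; mere positivity is not enough. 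In the actual applications later in the paper such two-sided control is available and the nonlinear estimates are carried out directly (Section~\ref{con-q}, with the restriction $\alpha\le p-1$), so this imprecision in (iv) is harmless there. Your diagnosis of the difficulty is accurate.
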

 
  \subsection{The singular solution}
 
 The building block for our theory is the existence of a singular solution with different behaviour at the origin and at infinity. The following theorem provides such a solution. 
 
  \begin{thm}[\cite{Mazzeo-Pacard96}] \label{exists1}
  Suppose that $\frac{N}{N-2}<p<\frac{N+2}{N-2}$. Then for every $\beta>0$ there exists a unique radial solution $u$ to \begin{align} \label{singu11} \left\{\begin{array}{ll} -\D u=u^p\quad\text{in }\R^N\setminus\{0\}\\ u>0 \quad\text{in }\R^N\setminus\{0\}\\
  \lim_{|x|\to0} u(x)=\infty,  \end{array}  \right.\end{align}  such that $$\lim_{r\to\infty}r^{N-2}u(r)=\beta,\quad \lim_{r\to 0^+}r^\frac{2}{p-1}u(r)=c_p:=[k(p,N)]^\frac{1}{p-1},$$ where \begin{align*}k(p,N)&= \frac{2}{p-1}\left(N-\frac{2p}{p-1}\right). \end{align*} \end{thm}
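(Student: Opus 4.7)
The plan is to reduce the PDE to a planar autonomous system via the Emden--Fowler change of variables and then produce the singular solution as a heteroclinic orbit. Setting $u(r)=r^{-\alpha}w(t)$ with $t=\log r$ and $\alpha=2/(p-1)$, a direct computation transforms the radial equation $u''+\frac{N-1}{r}u'+u^p=0$ into the autonomous ODE
$$\ddot w+A\dot w-kw+w^p=0,\qquad A:=N-2-2\alpha,\quad k:=\alpha(N-2-\alpha)=k(p,N).$$
In the range $N/(N-2)<p<(N+2)/(N-2)$ one has $\alpha\in\bigl(\tfrac{N-2}{2},\,N-2\bigr)$, hence $k>0$ and $A<0$. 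Viewed as a planar system $\dot w=v$, $\dot v=-Av+kw-w^p$, the equilibria are $(0,0)$ and $(c_p,0)$ with $c_p=k^{1/(p-1)}$. At $(0,0)$ the characteristic polynomial factors as $(\lambda-\alpha)(\lambda+(N-2-\alpha))$, making it a hyperbolic saddle whose one-dimensional stable manifold carries solutions decaying like $e^{-(N-2-\alpha)t}$, which corresponds precisely to the required $u(r)\sim\beta r^{-(N-2)}$ at infinity. At $(c_p,0)$ the linearisation $\mu^2+A\mu+(p-1)k=0$ has both roots with strictly positive real part since $A<0$, so $(c_p,0)$ is a source with a full two-dimensional unstable set; equivalently, there is a one-parameter family of orbits (modulo time translation) approaching $(c_p,0)$ as $t\to-\infty$.

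The singular solution is then a heteroclinic orbit from $(c_p,0)$ at $t=-\infty$ to $(0,0)$ at $t=+\infty$. The key tool is the Lyapunov function
$$E(w,\dot w)=\tfrac12\dot w^2-\tfrac{k}{2}w^2+\tfrac{1}{p+1}w^{p+1},$$
which satisfies $\dot E=-A\dot w^2\geq 0$; at the equilibria $E(c_p,0)=-\tfrac{(p-1)k}{2(p+1)}c_p^2<0$ and $E(0,0)=0$, which is compatible with monotonicity along such a heteroclinic. I would then parametrise orbits on the unstable manifold of $(c_p,0)$ by a single angle $\theta$ of departure and perform a shooting argument. A nullcline analysis on the system $\dot v=|A|v+kw-w^p$ shows that orbits launched into the sector $\{w>c_p,\,\dot w>0\}$ are trapped there and escape every compact set, while orbits launched into $\{w<c_p,\,\dot w<0\}$ cross $\{w=0\}$ transversally in finite time. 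Since both sets of $\theta$ are open, by connectedness there is a critical value $\theta_\ast$ whose orbit remains in the strip $\{0<w<c_p\}$ for all $t$. The strict monotonicity of $E$ rules out periodic orbits, so the bounded $\omega$-limit set must consist of equilibria, and it can only be $\{(0,0)\}$.

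Uniqueness and the parameter $\beta$ come from the scaling invariance $u(r)\mapsto\lambda^{\alpha}u(\lambda r)$, which preserves the ODE and the singular normalisation $r^{\alpha}u\to c_p$ at the origin while multiplying the decay coefficient at infinity by $\lambda^{\alpha-(N-2)}$; since $\alpha\neq N-2$, this realises every $\beta>0$ from a single orbit. Uniqueness for each fixed $\beta$ is a consequence of the hyperbolicity of the saddle $(0,0)$: its one-dimensional stable manifold is uniquely determined, and backward continuation along it identifies the heteroclinic up to time translation, which is precisely the scaling freedom. The main obstacle is the shooting step — in particular ruling out periodic orbits inside the strip via strict monotonicity of $E$, and controlling the ``upper'' shooting orbits by using the sign structure of the nullcline $|A|v=w^p-kw$ to show they are trapped on the far side of $(c_p,0)$ and cannot return to the bounded invariant region.
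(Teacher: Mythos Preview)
The paper does not give its own proof of this statement; Theorem~\ref{exists1} is quoted verbatim from \cite{Mazzeo-Pacard96} and used as a black box. Your Emden--Fowler reduction to the autonomous equation $\ddot w+A\dot w-kw+w^p=0$ with $A<0$, $k>0$, followed by a phase-plane analysis, is precisely the method employed in that reference, so in substance you are reconstructing the original argument rather than offering an alternative.

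One point in your write-up deserves tightening. In the range $\frac{N}{N-2}<p<\frac{N+2}{N-2}$ the eigenvalues at $(c_p,0)$ may be complex (the discriminant $A^2-4(p-1)k$ can be negative; this is why the paper writes $\Re(\gamma_0^\pm)$). When the source is a spiral, orbits do not leave along a fixed sector, so your dichotomy ``launched into $\{w>c_p,\dot w>0\}$'' versus ``launched into $\{w<c_p,\dot w<0\}$'' is not a clean partition of the departing directions, and the open/connectedness step is incomplete as stated. The standard fix---also the route taken in \cite{Mazzeo-Pacard96}---is to shoot \emph{backward} along the one-dimensional stable manifold of the saddle $(0,0)$: the branch entering $\{w>0\}$ satisfies $E\le 0$ in backward time by your monotonicity $\dot E=|A|\dot w^2\ge0$, hence remains in the compact set $\{E\le 0,\,w\ge 0\}$; Poincar\'e--Bendixson together with the absence of periodic orbits then forces its $\alpha$-limit to be $(c_p,0)$. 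This simultaneously yields existence and uniqueness of the heteroclinic (up to time shift), after which your scaling argument produces every $\beta>0$.
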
 

Let $u$ be a singular radial solution to \eqref{singu11}. Then $u_\ve(x):=\ve^{-\frac{2}{p-1}}u(\frac x\ve)$ is also a solution to \eqref{singu11}. Note that $$u_\ve(x)\leq C(\delta,u) \ve^{N-2-\frac{2}{p-1}}\quad\text{for }|x|\geq\delta, $$ which shows that $u_\ve\to0$ locally uniformly in $\R^N\setminus\{0\}$. Due to this scaling and the asymptotic behavior of $u$ at infinity, for a given $\alpha>0$,  we can find a solution $u_1$ such that
 $$r^2u_1^{p-1}(r)\leq \alpha\quad\text{ on  }(1,\infty).$$

 \subsection{The linearized operator around the singular solution}

We consider the linearized operator $$L_1=\D +pu_1^{p-1}$$ where in polar coordinates we denote $$\D=\frac{\partial^2}{\p r^2}+\frac{N-1}{r}\frac{\p}{\p r}+\frac{1}{r^2}\D_\theta.$$  

Following \cite{Mazzeo-Pacard96}, we recall that   $\gamma_j$ is an indicial root of $L_1$ at $0$ if $L_1(|x|^{\gamma_j}\vp_j)=o(|x|^{\gamma-2})$, where $\vp_j$ is the $j$-th eigenfunction of $-\D_\theta$ on $S^{N-1}$, that is $-\D_\theta\vp_j=\lambda_j\vp_j$, $$\lambda_0=0,\quad \lambda_j=N-1,\quad \text{for }j=1,\dots,N,$$ and so on.   
Setting 
\begin{align}\label{Ap}A_p:=p\lim_{r\to0}r^2u_1^{p-1}(r)=pk(p,N).\end{align}  we have that   \begin{align*}\gamma_j^{\pm} =\frac12\left[2-N\pm\sqrt{(N-2)^2+4(\lambda_j  -A_p)}\right] .\end{align*}
For $\frac{N}{N-2}<p<\frac{N+2}{N-2}$ we have that  ($\Re$ denotes the real part)
$$2-N<-\frac{2}{p-1}<\Re(\gamma_0^{-})\leq \frac{2-N}{2}\leq \Re(\gamma_0^{+})<0   $$
and 
$$ \gamma_j^{-}<-\frac{2}{p-1} \quad\text{for }j\geq1. $$
 
 Since $\lim_{r\to\infty}r^2u_1^{p-1}(r)=0$, the indicial roots of $L_1$ at infinity are the same as for the $\D$ itself. These values are given by  \begin{align*}\tilde \gamma_j^{\pm} =\frac12\left[2-N\pm\sqrt{(N-2)^2+4\lambda_j}\right] .\end{align*}

 We shall choose     $\mu,\nu$ in the region   \begin{align}\label{mu-nu} \frac{-2}{p-1}<\nu<\min\left\{ \frac{-2}{p-1}+1,\,\Re(\gamma_0^{-})\right\}\leq\frac{2-N}{2}\leq\Re(\gamma_0^{+})<\mu <0
 ,  \end{align} so that $\mu+\nu=2-N$.

 \medskip


We have the following propositions whose proofs can be found in \cite{Mazzeo-Pacard96}. 
 
  \begin{prop} \label{inj-prop-1}  Let $w\in C^{2,\alpha}_{\mu,0}(\R^N\setminus\{0\})$  be a solution to $L_1w=0$. Then $w\equiv0$. \end{prop}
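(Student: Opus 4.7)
The plan is to decompose $w$ in spherical harmonics, $w(r,\theta)=\sum_{j\geq 0} w_j(r)\vp_j(\theta)$, and work mode by mode. Each coefficient $w_j$ solves the Fuchsian ODE
\begin{equation*}
w_j'' + \tfrac{N-1}{r}\, w_j' - \tfrac{\lambda_j}{r^2}\, w_j + p\, u_1^{p-1}(r)\, w_j = 0,\qquad r>0,
\end{equation*}
with inherited weighted bounds $|w_j(r)|\lesssim r^\mu$ near $0$ and $|w_j(r)|\lesssim 1$ at infinity.

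The radial mode $j=0$ is eliminated directly by indicial analysis at the origin. The two fundamental solutions have leading behaviour $r^{\gamma_0^\pm}$ (possibly modulated by a logarithm or, in the degenerate case, an oscillation of magnitude $r^{(2-N)/2}$). By \eqref{mu-nu}, $\mu>\Re(\gamma_0^+)\geq\Re(\gamma_0^-)$, so any nontrivial combination is of strictly larger magnitude than $r^\mu$ near $0$, contradicting the weighted bound. Thus $w_0\equiv 0$ in a punctured neighborhood of $0$, and linear ODE uniqueness propagates this to $w_0\equiv 0$ on $(0,\infty)$.

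For $j\geq 1$ I first extract fast decay at infinity. Since $u_1^{p-1}(r)=O(r^{-(N-2)(p-1)})$ with $(N-2)(p-1)>2$, the operator is at infinity a small perturbation of $\partial_r^2+\tfrac{N-1}{r}\partial_r-\tfrac{\lambda_j}{r^2}$, whose indicial roots $\tilde\gamma_j^\pm$ satisfy $\tilde\gamma_j^-<2-N<0<\tilde\gamma_j^+$. The bound $|w_j|\lesssim 1$ rules out the growing component, so $w_j(r)=O(r^{\tilde\gamma_j^-})$ and $w_j'(r)=O(r^{\tilde\gamma_j^--1})$. Multiplying the mode equation by $w_j r^{N-1}$ and integrating on $(\epsilon,R)$, the boundary term $[w_j w_j' r^{N-1}]_\epsilon^R$ is $O(\epsilon^{2\mu+N-2})+O(R^{2\tilde\gamma_j^-+N-2})$, both $\to 0$ since $\mu>(2-N)/2$ and $\tilde\gamma_j^-<2-N$. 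Passing to the limit,
\begin{equation*}
\int_0^\infty \Bigl[(w_j')^2 + \tfrac{\lambda_j}{r^2}\, w_j^2 - p\, u_1^{p-1}(r)\, w_j^2\Bigr] r^{N-1}\,dr = 0.
\end{equation*}

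The final task is to prove strict positivity of this quadratic form for every $j\geq 1$. Combining Hardy's inequality $\int (w_j')^2 r^{N-1}\,dr\geq\bigl(\tfrac{N-2}{2}\bigr)^2\int w_j^2 r^{N-3}\,dr$ with the bound $p\,r^2 u_1^{p-1}(r)\leq A_p$ (valid globally because $r\mapsto r^{2/(p-1)}u_1(r)$ is monotone for the Emden--Fowler singular profile), the integrand is bounded below by $\bigl[\bigl(\tfrac{N-2}{2}\bigr)^2+\lambda_j-A_p\bigr]\, w_j^2/r^2$. A direct computation, writing $A_p=2s(N-2s)$ with $s=p/(p-1)\in((N+2)/4,N/2]$, yields $A_p<N^2/4$ throughout the Serrin range, so this coefficient strictly exceeds $\lambda_j-(N-1)\geq 0$ for $j\geq 1$, forcing $w_j\equiv 0$. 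The main obstacle is precisely this coercivity: it requires both the global pointwise bound on $p\,r^2 u_1^{p-1}$ (via monotonicity of the Emden--Fowler profile) and the spectral gap $A_p<\lambda_1+\bigl(\tfrac{N-2}{2}\bigr)^2$, both of which depend essentially on the subcritical hypothesis $p<(N+2)/(N-2)$.
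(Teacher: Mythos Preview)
Your overall architecture---spherical-harmonic decomposition followed by a mode-by-mode analysis, with $j=0$ handled via indicial roots at the origin and $j\geq 1$ via an energy identity---is exactly the route taken in \cite{Mazzeo-Pacard96} (to which the paper defers for this proof), and the $j=0$ step is correct.

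The genuine gap is in the modes $j\geq 1$. Your coercivity argument rests on the global pointwise bound $p\,r^2u_1^{p-1}(r)\leq A_p$, justified by the claim that the Emden--Fowler profile $v(t)=r^{2/(p-1)}u_1(r)$, $t=\log r$, is monotone. This is false for $p$ close to $\frac{N+2}{N-2}$. The profile $v$ solves $\ddot v+B\dot v-k(p,N)v+v^p=0$ with $B=N-2-\tfrac{4}{p-1}$, and the linearization at the equilibrium $c_p$ has discriminant $B^2-4(p-1)k$; a direct computation shows this equals $(N-2)^2-4A_p$, i.e.\ precisely the discriminant governing $\gamma_0^\pm$. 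Whenever $A_p>(\tfrac{N-2}{2})^2$---which happens on a nonempty subinterval of the admissible range, in particular near the critical exponent---the equilibrium $c_p$ is a spiral, the heteroclinic orbit oscillates around $c_p$ as $t\to-\infty$, and $v$ overshoots $c_p$. Then $p\,r^2u_1^{p-1}>A_p$ somewhere, and Hardy's inequality together with $A_p<N^2/4$ no longer yields nonnegativity of your quadratic form. (Since all singular solutions of Theorem~\ref{exists1} differ only by scaling, which merely translates $v$ in $t$, no normalization of $u_1$ repairs this.)

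The argument in \cite{Mazzeo-Pacard96} bypasses the pointwise bound by using the explicit positive solution $\phi(r):=-u_1'(r)$ of the $j=1$ radial equation; positivity follows from $(r^{N-1}u_1')'=-r^{N-1}u_1^p<0$ and $r^{N-1}u_1'\to 0^-$ as $r\to 0$. Writing $w_1=\phi\,g$, the equation $L_1 w_1=0$ becomes $(\phi^2 r^{N-1}g')'=0$, so $g'=C\phi^{-2}r^{1-N}$; since $\phi\sim c\,r^{1-N}$ at infinity this forces $C=0$ (otherwise $g$ blows up), hence $w_1$ is a multiple of $\phi$. But $\phi\sim c'\,r^{-\frac{2}{p-1}-1}$ near $0$, which is not $O(r^\mu)$ because $\mu>-\frac{2}{p-1}$, so $w_1\equiv 0$. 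For $j\geq 2$ the same energy identity acquires the extra strictly positive term $(\lambda_j-\lambda_1)\int_0^\infty w_j^2\,r^{N-3}\,dr$, which forces $w_j\equiv 0$ directly. Replacing your Hardy-plus-pointwise step by this ground-state argument closes the gap.
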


\begin{prop}\label{inj-prop-2}Let $w\in C^{2,\alpha}_{\gamma,\gamma}(\R^N\setminus\{0\})$ be a solution to $$\D w+\frac{A_p}{r^2}w=0\quad\text{in }\R^N\setminus\{0\},$$ where $A_p$ is given by \eqref{Ap}. If $\gamma$ is  not an indicial root of the operator $\D +\frac{A_p}{r^2}$ then $w\equiv 0$.  \end{prop}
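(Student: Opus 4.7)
The plan is to decompose $w$ into spherical harmonics and thereby reduce the PDE to an infinite family of Euler ODEs. Writing $w(r,\theta)=\sum_{j\geq 0} w_j(r)\vp_j(\theta)$ in an $L^2$-orthonormal basis $\{\vp_j\}$ of eigenfunctions of $-\Delta_\theta$ on $S^{N-1}$ with eigenvalues $\lambda_j$, the PDE $\Delta w+A_p r^{-2}w=0$ separates into
\[
w_j''(r)+\frac{N-1}{r}w_j'(r)+\frac{A_p-\lambda_j}{r^2}w_j(r)=0,
\]
which is an Euler equation whose characteristic polynomial coincides with the indicial polynomial of $\Delta+A_p/r^2$ in the mode $j$. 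Its roots are precisely $\gamma_j^\pm=\tfrac12\bigl[2-N\pm\sqrt{(N-2)^2+4(\lambda_j-A_p)}\bigr]$.

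Next I would transfer the weighted bound to each mode: since $w\in C^{2,\alpha}_{\gamma,\gamma}(\R^N\setminus\{0\})$ forces $|w(x)|\leq C|x|^\gamma$ on all of $\R^N\setminus\{0\}$, the representation $w_j(r)=\int_{S^{N-1}}w(r\theta)\vp_j(\theta)\,d\sigma(\theta)$ yields $|w_j(r)|\leq C_j\, r^\gamma$ for every $r>0$. The general solution of the $j$-th Euler ODE is $a_j r^{\gamma_j^+}+b_j r^{\gamma_j^-}$, replaced by $(a_j+b_j\log r)\,r^{\gamma_j^+}$ when $\gamma_j^+=\gamma_j^-$ and by the real and imaginary parts of $r^{\gamma_j^\pm}$ when the roots form a complex conjugate pair. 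Requiring $r^{-\gamma}|w_j(r)|$ to stay bounded both as $r\to 0^+$ and as $r\to\infty$ kills the smaller-real-part contribution (otherwise blow-up at $0$), kills the larger-real-part contribution (otherwise blow-up at $\infty$), and kills any logarithmic correction; all that can possibly survive is a pure power $r^\gamma$ with $\mathrm{Re}(\gamma_j^\pm)=\gamma$. Under the hypothesis that $\gamma$ is not an indicial root, this forces $a_j=b_j=0$ for every $j$, whence $w\equiv 0$.

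The only real subtlety lies in the complex conjugate case, which occurs precisely when $A_p-\lambda_j>(N-2)^2/4$: the two oscillating fundamental solutions $r^{(2-N)/2}\cos(\beta_j\log r)$ and $r^{(2-N)/2}\sin(\beta_j\log r)$ satisfy a two-sided bound of the form $|w_j(r)|\leq C r^{(2-N)/2}$, so the statement must be read as also excluding $\gamma=(2-N)/2$ whenever such a mode is present (the natural interpretation when indicial roots are allowed to be complex). Apart from this case bookkeeping, the proof is routine: the $C^{2,\alpha}$ control on derivatives required by the full weighted norm is immediate from the explicit power/logarithmic form of the fundamental solutions, so once the $L^\infty$ bound $|w_j(r)|\leq C_j r^\gamma$ is in place, verifying that each $w_j$ lives in $C^{2,\alpha}_{\gamma,\gamma}$ is automatic.
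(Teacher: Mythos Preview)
Your argument is correct and is exactly the standard proof---spherical-harmonic decomposition reducing to Euler ODEs in each mode, followed by elimination of both fundamental solutions via the two-sided bound $|w_j(r)|\le C_j r^\gamma$---that the paper defers to \cite{Mazzeo-Pacard96}. Your caveat about the complex-root case is also the right reading: throughout the paper ``not an indicial root'' is used in the sense $\gamma\notin\{\Re\gamma_j^\pm\}$ (cf.\ the hypotheses in the Fredholm lemmas of Section~\ref{lin-th}), which is precisely what makes the $j=0$ mode go through.
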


\section{A scheme of the proof} \label{proof}

Let $\Sigma=\{ \xi_0\}\subset \Omega$. To construct a solution to \eqref{eq-1} which is singular precisely at the point $\xi_0$, we start by constructing  an approximate solution to \eqref{eq-1} which is singular exactly on $\Sigma$. For $\sigma>0$ small (to be chosen later) let us first fix a  non-negative cut-off function $\chi  \in C_c^\infty(B_\sigma(\xi_0))$  such that $\chi=1$  in   $B_\frac\sigma2(\xi_0)$.     An approximate solution $\bar u_{\bve}$  is defined by $$\bar u_{ \bve}(x)= \chi(x)u_{\ve}(x-\xi_0)= \ve^{-\frac{2}{p-1}}\chi(x )u_1(\frac{x-\xi_0}{\ve}).$$ 
We shall look for positive solutions of the form $u=\bar u_\ve+v$. Then, $v$ has to satisfy \begin{align}\label{eq-v} L_\bve v+f_\bve+Q(v)=0,\end{align}
where the linear operator $L_\bve$ is  
\begin{equation}\label{l}  L_\bve v:=div (a\nabla v)+a[p\bar u_\bve^{p-1}-h]v,\end{equation}
the error term is 
\begin{equation}\label{f}f_\bve:= div(a\nabla \bar u_\ve)-ah\bar u_\ve+a\bar u_\ve^p,\end{equation}
and the non-linear term $Q$ is
\begin{equation}\label{q}Q(v)=a[|\bar u_\ve+v|^p-\bar u_\ve^p-p\bar u_\bve^{p-1} v] . \end{equation}

To prove existence of solution to \eqref{eq-v} we will use a fixed point argument on the space $C^{2,\alpha}_{\nu,\mathcal{D}}(\Omega\setminus\Sigma)$ for a suitable $\nu$.  
  We note that if  $v\in C^{2,\alpha}_{\nu,\mathcal{D}}(\Omega\setminus\Sigma)$ solves \eqref{eq-v} then by maximum principle we have that $\bar u_\bve +v>0$ in $\Omega$. This  is a simple consequence of the fact that we will choose $\nu>-\frac{2}{p-1}$, and therefore,   $\bar u_\bve+v>0$ in a small neighborhood of $\Sigma$, thanks to the asymptotic behavior of $\bar u_\bve$,  around the origin and the coercivity assumption on $h$. 
\\

First of all, we estimate the size of the error term.
\begin{lem}\label{error}  The error  $f_\bve$  satisfies  $$\|f_\bve\|_{C^{0,\alpha}_{\gamma-2}}\leq C_\gamma \max\left\{ \ve^{1-\gamma-\frac{2}{p-1}},\ve^{N-\frac{2p}{p-1}}\right\} \quad\text{for } \ve>0\text{ small},  $$ for every $\gamma <1-\frac{2}{p-1}$. \end{lem}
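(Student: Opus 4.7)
The plan is to expand $f_\bve$ explicitly using $\bar u_\bve = \chi\, u_\ve(\cdot-\xi_0)$ and the identity $-\Delta u_\ve = u_\ve^p$ off the origin. Distributing $\mathrm{div}(a\nabla\cdot)$ and collecting terms, the leading singular contribution $-a\chi u_\ve^p$ cancels against $a\bar u_\bve^p = a\chi^p u_\ve^p$ up to a harmless remainder $a(\chi^p-\chi)u_\ve^p$ supported where $\chi\not\equiv 1$. This produces the decomposition
\begin{equation*}
f_\bve \;=\; \bigl(\chi\nabla a\cdot\nabla u_\ve - ah\chi u_\ve\bigr) \;+\; \bigl(a(\chi^p-\chi)u_\ve^p + 2a\nabla\chi\cdot\nabla u_\ve + (a\Delta\chi + \nabla a\cdot\nabla\chi)u_\ve\bigr),
\end{equation*}
in which only the first bracket is active on $N_{\sigma/2}$, while the second bracket is supported in the annulus $N_\sigma\setminus N_{\sigma/2}$.

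\textbf{Inner estimate.} The main work is to control the first bracket in the weighted seminorm at every dyadic scale $s<\sigma/2$. Using the two-sided asymptotics of Theorem~\ref{exists1} together with the rescaling $u_\ve(x)=\ve^{-2/(p-1)}u_1(x/\ve)$, I would use the pointwise bounds
\begin{equation*}
|\nabla^j u_\ve(x-\xi_0)| \le C\,s^{-\frac{2}{p-1}-j} \text{ for } s\le\ve, \qquad |\nabla^j u_\ve(x-\xi_0)| \le C\,\ve^{N-2-\frac{2}{p-1}}\,s^{2-N-j} \text{ for } s\ge\ve,
\end{equation*}
valid on $N_s\setminus N_{s/2}$ for $j=0,1,2$. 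Handling the H\"older portion of $|\cdot|_{0,\alpha,s}$ via $[f]_{\alpha, N_s\setminus N_{s/2}}\le s^{1-\alpha}\sup|\nabla f|$, the dominant $\nabla a\cdot\nabla u_\ve$ term contributes $s^{2-\gamma}|\nabla u_\ve|_{0,\alpha,s}\lesssim s^{1-\gamma-2/(p-1)}$ when $s\le\ve$ and $\lesssim \ve^{N-2-2/(p-1)}s^{3-\gamma-N}$ when $s\ge\ve$, while the subleading $ahu_\ve$ piece gives one extra power of $s$. Since $\gamma<1-2/(p-1)$ the inner expression is increasing in $s$ and is maximized at $s=\ve$, yielding $\ve^{1-\gamma-2/(p-1)}$; the outer expression is maximized either at $s=\ve$ (by continuity, producing the same value) or at $s=\sigma$ (giving $O(\ve^{N-2-2/(p-1)})$), according to the sign of $3-\gamma-N$.

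\textbf{Annulus and conclusion.} On $N_\sigma\setminus N_{\sigma/2}$ and on $\bar\Omega\setminus N_{\sigma/2}$ we are always in the outer regime, so $u_\ve$ and all its derivatives are uniformly $O\bigl(\ve^{N-2-2/(p-1)}\bigr)$ and the nonlinear $u_\ve^p$ term is strictly smaller; hence each summand of the second bracket is controlled by $\ve^{N-2-2/(p-1)}$ in $C^{0,\alpha}$. Taking the maximum of all these contributions and using the identity $N-2-2/(p-1)=N-2p/(p-1)$ yields the claimed estimate. I expect the main technical nuisance to be the two-regime bookkeeping — continuously matching the inner and outer asymptotics across $s=\ve$ and verifying that the H\"older parts of each seminorm are dominated by the same powers of $s$ and $\ve$ as the pointwise suprema — rather than any conceptual obstruction.
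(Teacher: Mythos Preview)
Your proposal is correct and follows essentially the same route as the paper: expand $f_\bve$ via $\mathrm{div}(a\nabla\cdot)=a\Delta+\nabla a\cdot\nabla$, use the cancellation $\Delta u_\ve+u_\ve^p=0$ so that the cutoff-supported terms sit in the outer regime and contribute $O(\ve^{N-2p/(p-1)})$, and bound the dominant inner term $\nabla a\cdot\nabla u_\ve$ with the two-regime asymptotics of $u_1$. In fact your treatment is slightly more thorough than the paper's, which simply asserts that the H\"older seminorm ``should have the same order'' and does not separate out the sign of $3-\gamma-N$; your monotonicity argument in $s$ makes this explicit.
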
 

\begin{proof} We only estimate the first term in \eqref{seminorm}, the estimate for the second term should having the same order as of the first one.  It follows that 
$$u_\ve(x-\xi_0)\approx\left\{ \begin{array}{ll}  |x-\xi_0|^{-\frac{2}{p-1}}&\quad\text{for }|x-\xi_0|\leq\ve \\ \rule{0cm}{.7cm} \frac{\ve^{N-\frac{2p}{p-1}}}{|x-\xi_0|^{N-2}}&\quad\text{for }|x-\xi_0|\geq\ve,\end{array}\right.  $$
and 
$$|\nabla u_\ve(x-\xi_0)|\leq C\left\{ \begin{array}{ll}  |x-\xi_0|^{-\frac{2}{p-1}-1}&\quad\text{for }|x-\xi_0|\leq\ve \\ \rule{0cm}{.7cm} \frac{\ve^{N-\frac{2p}{p-1}}}{|x-\xi_0|^{N-1}}&\quad\text{for }|x-\xi_0|\geq\ve.\end{array}\right. $$
We write $$f_\ve=a[\D \bar u_\ve+\bar u_\ve^p]+\nabla \bar u_\ve\cdot\nabla a-ah\bar u_\ve.$$Since $\chi\equiv 1$ in a small neighborhood of $\xi_0$, we have  $$a|\D\bar u_\ve+\bar u_\ve^p|\leq C\ve^{N-\frac{2p}{p-1}}\quad\text{and }u_\ve(x-\xi_0)|\nabla \chi\cdot \nabla a|\leq \ve^{N-\frac{2p}{p-1}}.$$  Moreover, 
 \begin{align*} |x-\xi_0|^{2-\gamma}|\nabla u_\ve(x-\xi_0)||\nabla a(x)|\leq C\ve^{1-\gamma-\frac{2}{p-1}}, \end{align*}  $$|x-\xi_0|^{2-\gamma}\bar u_\ve(x-\xi_0)\leq C\ve^{2-\gamma-\frac{2}{p-1}}.$$ The lemma follows. 

\end{proof}

 Next, we   use the linear theory of $L_\bve$ developed in the   Section \ref{lin-th} and, applying the inverse of $L_\bve$, that is $G_\bve$, we rewrite the above equation \eqref{eq-v} as $$v+G_\bve f_\bve +G_\bve Q(v)=0.$$ The crucial fact we shall use is that the norm of $G_\bve$ is uniformly bounded if $\ve$ is sufficiently small.\\ 
  
   By Lemma \ref{error}, the error   $f_\bve $ satisfies  the estimate   $$\|f_\bve\|_{0,\alpha,\nu-2}\leq C\ve^ q,\quad q:=\min\left\{ {N-\frac{2p}{p-1}}, 1-\nu-\frac{2}{p-1}\right\}.$$ Then, there exists $C_0>0$ such that $\|G_\bve f_\bve \|_{2,\alpha,\nu}\leq C_0 \ve ^q$. This suggests to work on the ball $$\mathcal {B}_{\ve,M}=\left\{  v\in C^{2,\alpha}_{\nu} :\|v\|_{2,\alpha,\nu}\leq M\ve ^q\right\},$$ for some $M>2C_0$ large.   In Section \ref{con-q} we shall show that the map $v\mapsto G_\bve[f_\bve+Q(v)]$ is a contraction on the ball  $\mathcal {B}_{\ve,M}$ when $M$ is large and  $\ve$ is small enough.  That will concludes our proof.

 \medskip

  \section{The linear operator $L_{\bve}$}\label{lin-th}
  \subsection{Injectivity of $L_{\bve}$ on $C^{2,\alpha}_{\mu,\mathcal {D}}(\Omega\setminus\Sigma)$} In this section we study injectivity of the linearized operator  $$L_{\bve}w:=div(a\nabla w)+a[p \bar u_\ve^{p-1}-h] w.$$
  We shall use the following notations:  
  $$\Omega_{ \bve}:=\Omega\setminus  B_\ve(\xi_0),\quad f^+:=\max\{f,0\},\quad f^-:=\min\{f,0\}.$$  
  
    \begin{lem} \label{maximum-principle}After a suitable normalization of $u_1$, the operator $L_{ \bve}$ satisfies maximum principle in $\Omega_{\bve}$ for $\ve>0$ small. More precisely, if $w\in H^1(\Omega_\bve)$ satisfies  \begin{align*}  \left\{\begin{array}{ll} L_{\bve}w\geq 0&\quad\text{in }\Omega_{\bve} \\ w\leq0&\quad\text{on }\partial \Omega_{ \bve} ,   \end{array}\right.\end{align*} then $w\leq 0$  in $\Omega_\bve$. \end{lem}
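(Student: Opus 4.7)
The strategy is to establish a uniform $H^1_0$-coercivity estimate for $-L_\bve$ on $\Omega_\bve$ and then derive the weak maximum principle by the standard argument of testing $L_\bve w\ge 0$ against $w^+$. The delicate point is that the zero-order coefficient $a(p\bar u_\bve^{p-1}-h)$ blows up near $\xi_0$; however, the singular part is of the form $c/|x-\xi_0|^2$, which is precisely the range controlled by Hardy's inequality.

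The first step is to invoke the normalization of $u_1$ noted at the end of Subsection 2.2: replacing $u_1$ by $u_\lambda(x)=\lambda^{-2/(p-1)}u_1(x/\lambda)$ for sufficiently large $\lambda>0$ (still a singular solution of \eqref{singu11}), we arrange $r^2u_1^{p-1}(r)\le\alpha$ on $[1,\infty)$ with $\alpha>0$ as small as we wish. Scaling then yields
\[
\bar u_\bve^{p-1}(x)=\chi(x)^{p-1}\bve^{-2}u_1^{p-1}\!\left(\tfrac{x-\xi_0}{\bve}\right)\le \frac{\alpha}{|x-\xi_0|^2}
\]
on $\Omega_\bve$, since $|x-\xi_0|/\bve\ge 1$ and $0\le\chi\le 1$ there. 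Combining this pointwise bound with Hardy's inequality, applied after extending $w\in H^1_0(\Omega_\bve)$ by zero to $B_\bve(\xi_0)$, gives
\[
\int_{\Omega_\bve}a\,p\bar u_\bve^{p-1}w^2\,dx\le \frac{4p\alpha\|a\|_\infty}{(N-2)^2}\int_{\Omega}|\nabla w|^2\,dx.
\]

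Together with the coercivity assumption $\int a|\nabla w|^2+ahw^2\ge c_0\|w\|_{H^1}^2$, choosing $\alpha$ small enough (uniformly in $\bve$) produces
\[
\int_{\Omega_\bve}a|\nabla w|^2+a(h-p\bar u_\bve^{p-1})w^2\,dx\ge\tfrac{c_0}{2}\|w\|_{H^1}^2
\]
for every $w\in H^1_0(\Omega_\bve)$. Now, given $w\in H^1(\Omega_\bve)$ with $L_\bve w\ge 0$ and $w\le 0$ on $\p\Omega_\bve$, the positive part $w^+$ lies in $H^1_0(\Omega_\bve)$; testing the differential inequality against $w^+$, integrating by parts, and using $\nabla w\cdot\nabla w^+=|\nabla w^+|^2$ a.e.\ yields
\[
\int a|\nabla w^+|^2+a(h-p\bar u_\bve^{p-1})(w^+)^2\,dx\le 0.
\]
The coercivity just established forces $w^+\equiv 0$, i.e.\ $w\le 0$ in $\Omega_\bve$.

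The principal obstacle is the quantitative interplay between the Hardy constant, the coercivity constant $c_0$, and the tail decay of $u_1$: only after calibrating the normalization parameter $\alpha$ to be small relative to $c_0/\|a\|_\infty$ and the Hardy constant do the $p\bar u_\bve^{p-1}$ and $-h$ terms assemble into a uniformly coercive bilinear form. A minor technicality is extending $w^+$ by zero across $\p B_\bve(\xi_0)$ before invoking Hardy, which is harmless since $w^+$ vanishes on that inner boundary.
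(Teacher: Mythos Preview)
Your proof is correct and follows essentially the same approach as the paper: both normalize $u_1$ so that $p\bar u_\bve^{p-1}\le \alpha|x-\xi_0|^{-2}$ on $\Omega_\bve$, extend $w^+$ by zero into $B_\bve(\xi_0)$, apply Hardy's inequality to absorb the singular potential into the coercive form $\int a(|\nabla v|^2+hv^2)$, and conclude $w^+\equiv 0$ by testing the differential inequality against $w^+$. The only cosmetic difference is that the paper records coercivity as a lower bound by $c_0\int|\nabla v|^2$ rather than $c_0\|v\|_{H^1}^2$, which is immaterial here.
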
 
    
  \begin{proof} The crucial fact we shall use  is that the operator $$v\mapsto -div(a\nabla v)+ahv,$$ is coercive, that is, for some $c_0>0$ we have $$\int_{\Omega}a[|\nabla v|^2+hv^2]dx\geq c_0\int_{\Omega}|\nabla v|^2dx\quad \text{for every }v\in H^1_0(\Omega).$$  Since $w\leq 0$ on the boundary $\partial\Omega_\bve$,  by extending $w^+$ by $0$ on $B_\ve(\xi_0)$   we see that $w^+\in H^1_0(\Omega)$. Multiplying the inequality $L_\ve w\geq 0$ by $w^+$, and  then integrating by parts   we obtain \begin{align*}  \int_{\Omega}a[|\nabla w^+|^2+h(w^+)^2-p\bar u_\bve^{p-1}(w^+)^2]dx=0. \end{align*}  We also have $$\int_{\Omega}\frac{(w^+)^2}{|x-\xi_0|^2}dx\leq \frac{4}{(N-2)^2}\int_{\Omega}|\nabla w^+|^2dx.$$  If we normalize $u_1$ in such a way that $$pa\bar u_\bve ^{p-1}\leq c_0\frac{(N-2)^2}{8}\frac{1}{|x-\xi_0|^2}\quad\text{on }\Omega_\bve,$$ then we have $$\int_{\Omega}|\nabla w^+|^2dx=0.$$ We conclude the lemma. 
    \end{proof}

  \begin{rem} The above proof shows that $L_{\bve}$ satisfies maximum principle in   $B_\sigma(\xi_0)\setminus B_\ve(\xi_0)$  for $\ve>0$ small.    \end{rem}
 
 \begin{lem}\label{7.2}Fix $\ve_0>0$ such that $L_{ \bve}$ satisfies maximum principle on $\Omega_{\bve}$. Let $2-N<\gamma<0$ be fixed. Let  $w_\bve$ be a solution to $L_\bve w_\bve=f_\bve$ on $\Omega_\bve$ for some $f_\bve\in C^{0,\alpha}_{\gamma-2}(\Omega_\bve)$, and $0<\ve\leq\ve_0$. Assume that $w_\bve=0$ on $\partial\Omega$. Then there exists $C>0$ such that \begin{align}\label{wve-est}\|w_\bve\|_{2,\alpha,\gamma}\leq C\left( \|f_\bve\|_{0,\alpha,\gamma-2}+ \ve^{-\gamma}\|w_\bve\|_{C^0(\partial B_{\ve}(\xi_0)} \right) .\end{align}  \end{lem}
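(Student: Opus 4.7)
\smallskip

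\noindent\textbf{Proof proposal.} The plan is to first obtain a pointwise bound of the form $|w_\bve(x)| \leq C\, R\, |x-\xi_0|^\gamma$ on $\Omega_\bve$, where $R$ denotes the right-hand side of \eqref{wve-est}, and then upgrade it to the full weighted $C^{2,\alpha}$ estimate by a rescaling/Schauder argument on dyadic annuli.

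For the pointwise bound, the natural barrier is $\Psi(x):=|x-\xi_0|^\gamma$. A direct computation gives
\begin{equation*}
\mathrm{div}(a\nabla\Psi)=\bigl[a(\xi_0)\gamma(\gamma+N-2)+O(|x-\xi_0|)\bigr]|x-\xi_0|^{\gamma-2}.
\end{equation*}
Since $2-N<\gamma<0$ we have $\gamma(\gamma+N-2)<0$, and by the normalization in Lemma \ref{maximum-principle} the zeroth-order term $a\,p\bar u_\bve^{p-1}\Psi$ is dominated by a fraction of $c_0(N-2)^2|x-\xi_0|^{\gamma-2}/8$. Hence, for $\sigma$ small enough,
\begin{equation*}
L_\bve\Psi \leq -c_1|x-\xi_0|^{\gamma-2}\quad\text{on }B_\sigma(\xi_0)\setminus B_\ve(\xi_0),
\end{equation*}
with $c_1>0$. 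Away from $\xi_0$ one uses the coercivity of $-\mathrm{div}(a\nabla)+ah$ directly (so that $L_\bve$ acts as a standard coercive operator whose maximum principle is at our disposal) together with a globally defined smooth positive supersolution $\Phi$ equal to $\Psi$ near $\xi_0$. Setting
\begin{equation*}
\hat w(x):=\bigl(A\|f_\bve\|_{0,\alpha,\gamma-2}+\ve^{-\gamma}\|w_\bve\|_{C^0(\partial B_\ve(\xi_0))}\bigr)\Phi(x),
\end{equation*}
for $A$ large enough depending on $c_1$, one checks $L_\bve\hat w\leq-|f_\bve|$ on $\Omega_\bve$, while $\hat w\geq 0=w_\bve$ on $\partial\Omega$ and $\hat w\geq\|w_\bve\|_{C^0(\partial B_\ve(\xi_0))}\geq|w_\bve|$ on $\partial B_\ve(\xi_0)$. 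Applying Lemma \ref{maximum-principle} to $\pm w_\bve-\hat w$ gives
\begin{equation*}
|w_\bve(x)|\leq C\bigl(\|f_\bve\|_{0,\alpha,\gamma-2}+\ve^{-\gamma}\|w_\bve\|_{C^0(\partial B_\ve(\xi_0))}\bigr)|x-\xi_0|^\gamma\quad\text{on }\Omega_\bve.
\end{equation*}

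To pass from this $L^\infty$ bound to the full weighted $C^{2,\alpha}$ estimate, I would scale on dyadic annuli. Fix $s\in(2\ve,\sigma)$ and consider $A_s:=N_{2s}\setminus N_{s/4}$. Define $\tilde w(y):=s^{-\gamma}w_\bve(\xi_0+sy)$ and $\tilde f(y):=s^{2-\gamma}f_\bve(\xi_0+sy)$ for $y$ in the fixed annulus $B_2\setminus B_{1/4}$. Then $\tilde w$ satisfies
\begin{equation*}
\mathrm{div}_y(\tilde a\nabla_y\tilde w)+\bigl[\tilde a\,p s^2\bar u_\bve^{p-1}(\xi_0+sy)-s^2\tilde a\tilde h\bigr]\tilde w=\tilde f,
\end{equation*}
where $\tilde a(y):=a(\xi_0+sy)$ and $\tilde h(y):=h(\xi_0+sy)$. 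By the asymptotics of $\bar u_\bve$ the rescaled zero-order coefficient is uniformly bounded (the same normalization as in Lemma \ref{maximum-principle} controls it), and $\tilde a$ is uniformly elliptic with $C^2$ bounds. The pointwise bound just obtained yields $\|\tilde w\|_{C^0(B_2\setminus B_{1/4})}\leq C R$, while the definition of the weighted norm gives $\|\tilde f\|_{C^{0,\alpha}(B_2\setminus B_{1/4})}\leq C\|f_\bve\|_{0,\alpha,\gamma-2}$. Interior Schauder estimates on the fixed annulus then provide
\begin{equation*}
\|\tilde w\|_{C^{2,\alpha}(\bar B_{3/2}\setminus B_{1/2})}\leq C\bigl(\|\tilde f\|_{C^{0,\alpha}(B_2\setminus B_{1/4})}+\|\tilde w\|_{C^0(B_2\setminus B_{1/4})}\bigr)\leq C R,
\end{equation*}
which, unscaled, is exactly the bound $s^{-\gamma}|w_\bve|_{2,\alpha,s}\leq C R$. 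The region $\bar\Omega\setminus N_{\sigma/2}$ is handled by standard boundary Schauder estimates applied to the uniformly elliptic operator $L_\bve$ (whose coefficients have uniform $C^{0,\alpha}$ bounds there). Combining the two contributions gives \eqref{wve-est}.

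I expect the main obstacle to be the careful verification that the barrier argument works uniformly in $\ve$: the zeroth-order term $a p\bar u_\bve^{p-1}$ is singular and of the same order $|x-\xi_0|^{-2}$ as the differential part close to $\xi_0$, so one must rely precisely on the normalization introduced in Lemma \ref{maximum-principle} (together with the constraint $2-N<\gamma<0$) to keep $L_\bve\Psi$ negative. A secondary delicate point is patching the local barrier near $\xi_0$ with a global supersolution on all of $\Omega_\bve$; one can do this by taking $\Phi$ to be the positive solution of $-\mathrm{div}(a\nabla\Phi)+ah\Phi=1$ in $\Omega\setminus N_{\sigma/4}$ with suitable boundary data and gluing it smoothly to $\Psi$ inside $N_\sigma$.
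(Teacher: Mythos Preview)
Your overall plan---a barrier argument for the weighted $C^0$ bound followed by Schauder estimates on rescaled dyadic annuli---is exactly the paper's strategy, and your Schauder step is carried out correctly.

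The gap is in the pointwise bound. The paper does \emph{not} attempt to build a global supersolution on all of $\Omega_\bve$. It applies the barrier $\phi(x)=|x-\xi_0|^\gamma$ only on the annulus $B_\sigma(\xi_0)\setminus B_\ve(\xi_0)$, which produces an estimate that still contains the extra term $c_{4,\bve}:=\|w_\bve\|_{C^0(\partial B_\sigma(\xi_0))}$. To absorb $c_{4,\bve}$ uniformly in $\ve$, the paper runs a contradiction/compactness argument: assuming the estimate fails along a sequence, one passes to a limit $w$ solving $L_{\tilde\ve}w=0$ in $\Omega_{\tilde\ve}$ (with $\bar u_{\tilde\ve}\equiv 0$ if $\tilde\ve=0$), $w=0$ on $\partial\Omega$, $|w|\lesssim\rho^\gamma$; since $\gamma>2-N$ the singularity is removable and coercivity (or the maximum principle, if $\tilde\ve>0$) forces $w\equiv 0$, a contradiction.

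Your attempt to bypass this by ``gluing $\Psi$ smoothly to a solution of $-\mathrm{div}(a\nabla\Phi)+ah\Phi=1$ in $\Omega\setminus N_{\sigma/4}$'' does not work as stated. A smooth interpolation $\eta\Psi+(1-\eta)\Phi$ produces cross terms $2a\nabla\eta\cdot\nabla(\Psi-\Phi)+\mathrm{div}(a\nabla\eta)(\Psi-\Phi)$ of uncontrolled sign in the transition region, so the glued function is in general \emph{not} a supersolution there. Nor can you simply use $\Psi$ globally: since $h$ is only assumed to make the operator coercive (not $h\geq 0$), the term $-ah\Psi$ may dominate $a\Delta\Psi$ for $\rho$ bounded away from $0$, and $L_\bve\Psi$ can be positive on $\Omega\setminus B_\sigma(\xi_0)$. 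Thus you cannot avoid the appearance of $\|w_\bve\|_{C^0(\partial B_\sigma(\xi_0))}$, and eliminating it is precisely where the paper's compactness step is needed. What you flag as a ``secondary delicate point'' is in fact the main missing ingredient.
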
 
 \begin{proof} 
For  $\phi(x):=|x-\xi_0|^\gamma$  we have  $$\D \phi(x)=c_{N,\gamma}|x-\xi_0|^{\gamma-2},\quad c_{N,\gamma}:=\gamma(N+\gamma-2)<0.$$   Since $$\nabla a\cdot\nabla\phi-ah\phi=O(|x-\xi_0|^{\gamma-1}),$$ for $\sigma>0$ small we have that $$a\D\phi +\nabla a\cdot\nabla\phi-ah\phi\leq \frac{c_{N,\gamma}}{2}a|x-\xi_0|^{\gamma-2}\quad\text{on }B_\sigma(\xi_0).$$This shows that for a suitable choice of $u_1$, we have for some $\delta>0$ $$L_\bve \phi(x)\leq - \delta |x-\xi_0|^{\gamma-2}\quad\text{on } {\bf \Omega}:=B_\sigma(\xi_0)\setminus B_\ve(\xi_0).$$ Therefore, we can choose $c_{1,\bve}\approx \|f_\bve\|_{0,\alpha,\gamma-2}$ so that $$  L_\bve(w_\bve+c_{1,\bve}\phi)\leq0  \quad\text{on } \bf\Omega.$$ We can also choose \begin{align*}c_{2,\bve}\approx  &  \ve^{-\gamma}\|w_\bve\|_{C^0(\partial B_{\ve}(\xi_0))}  +\|w_\bve\|_{C^0(\partial B_{\sigma}(\xi_0))} =: c_{3,\bve}+c_{4,\bve}, \end{align*} so that $$ w_\bve+(c_{1,\bve}+c_{2,\bve})\phi\geq 0  \quad\text{on } \partial\bf\Omega. $$ Then by Maximum principle we have that (to get the other inequality use $-\phi$) $$|w_\bve|\leq  (c_{1,\bve}+c_{2,\bve})\phi  \quad\text{in }\bf\Omega.$$ Since, $L_\ve w_\bve=f_\bve$ in $\Omega_\bve\setminus\bf\Omega$,  and $w_\ve=0$ on $\partial\Omega$, we get that $$|w_\bve (x) | \lesssim (c_{1,\bve}+c_{2,\bve})\quad \text{for }x\in \Omega_\bve\setminus\bf\Omega.$$ We claim that 
$$c_{4,\bve}\lesssim c_{3,\bve}+\|f_\bve\|_{0,\alpha,\gamma-2}.$$ We assume by contradiction that the above claim is false. Then there exists a family of solutions $w_\ell=w_{\bve_\ell}$ to $L_{\bve_\ell}w_\ell=f_\ell$ with $0<\ve_{\ell}<\ve_0$, $f_\ell \in C^{0,\alpha}_{\gamma-2}(\Omega_{\bve_\ell})$, $w_\ell=0$ on $\partial\Omega$ such that \begin{align}\label{assump-33}c_{4,\bve_\ell}=1\quad\text{and } c_{3,\bve_\ell}+\|f_\ell\|_{0,\alpha,\gamma-2}\to0. \end{align} Then, up to a subsequence, $\Omega_{\bve_\ell}\to \tilde\Omega_{\tilde \ve}$, where   $$\tilde\Omega_{\tilde \ve}=\Omega\setminus \{\xi_0\}\quad\text{if }\tilde\ve=0,\quad \text{and  }\tilde\Omega_{\tilde \ve}=\Omega_{\tilde\ve}=\Omega\setminus B_{\tilde\ve}(\xi_0)\quad\text{if }\tilde\ve>0. $$ 
From the estimates on $w_\ell$ we see that $w_\ell\to w$  in  $\tilde \Omega_{\tilde\ve}$. Moreover, $w$ satisfies  \begin{align*}\left\{\begin{array}{ll}L_{\tilde \ve}w=div(a\nabla w)-ahw+p\bar u_{\tilde \ve}^{p-1}w=0&\quad\text{in }\tilde\Omega_{\tilde \ve}\\  w=0&\quad\text{on }\partial\Omega.\end{array}\right.\end{align*}  Here,  for $\tilde\ve=0$  the function  $\bar u_{\tilde \ve}$  is considered to be identically zero. 

If $\tilde \ve>0$ then by Lemma \ref{maximum-principle} we get that $w\equiv 0$.  Next we consider the case $\tilde\ve=0$. We have that   $w(x)=O(|x-\xi_0|^\gamma)$,  and hence the singularity at $\xi_0$ is removable (note that $\gamma>2-N$), that is, $L_{\tilde\ve}w=0$ weakly in $\Omega$.    Thus, we can use  coercivity hypothesis on $h$  to conclude that $w\equiv 0$.  This contradicts the first condition in \eqref{assump-33}.

In this way we have that there exists $C>0$ independent of $\bve$, but  depending only on the right hand side  of \eqref{wve-est}  such that  $$|w_\bve|\leq C\phi \quad\text{in }\Omega_\bve. $$ The desired estimate follows from Lemma \ref{Schauder} and a scaling argument (see e.g. \cite  [Chapter 2.2.1]{Pacard-Riviere}).

 \end{proof}

\begin{lem} \label{inj-omega}There exists $\ve_0>0$ sufficiently small such that if  $\ve<\ve_0$ then $$L_\bve:C^{2,\alpha}_{\mu,\mathcal {D}} (\Omega\setminus\Sigma)\to C^{0,\alpha}_{\mu-2}(\Omega\setminus\Sigma)$$ is injective.   \end{lem}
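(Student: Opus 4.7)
The plan is to argue by contradiction via a two-scale blow-up, leveraging the a priori estimate from Lemma \ref{7.2} to drive the contradiction at the inner scale. Assume there exist sequences $\ve_n \to 0^+$ and $w_n \in C^{2,\alpha}_{\mu,\mathcal{D}}(\Omega\setminus\Sigma)$ with $L_{\ve_n} w_n = 0$ and $\|w_n\|_{2,\alpha,\mu} = 1$. By \eqref{mu-nu} we have $\mu \in (2-N,0)$, so Lemma \ref{7.2} applies with $\gamma = \mu$ and $f_{\ve_n} \equiv 0$, yielding
\[
1 \;=\; \|w_n\|_{2,\alpha,\mu} \;\leq\; C\, \ve_n^{-\mu}\, \|w_n\|_{C^0(\partial B_{\ve_n}(\xi_0))}.
\]
In particular the rescaled inner sup-norm remains bounded away from zero along the sequence.

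Next I would carry out an inner rescaling: set $\tilde w_n(y) := \ve_n^{-\mu}\, w_n(\xi_0 + \ve_n y)$. The lower bound above reads $\sup_{|y|=1}|\tilde w_n(y)| \geq 1/C$, and the normalization of $w_n$ together with the definition of the weighted seminorm gives the pointwise control $|\tilde w_n(y)| \leq C|y|^\mu$ on $\{|y| \leq \sigma/\ve_n\}$. Using the key identity $\ve_n^{2}\, \bar u_{\ve_n}^{p-1}(\xi_0 + \ve_n y) = \chi^{p-1}(\xi_0 + \ve_n y)\, u_1^{p-1}(y)$, a direct change of variables shows that $\tilde w_n$ solves
\[
a(\xi_0{+}\ve_n y)\, \Delta_y \tilde w_n \;+\; p\, a(\xi_0{+}\ve_n y)\, \chi^{p-1}(\xi_0{+}\ve_n y)\, u_1^{p-1}\, \tilde w_n \;+\; \ve_n (\nabla a)(\xi_0{+}\ve_n y)\cdot \nabla_y \tilde w_n \;-\; \ve_n^{2}\, (ah)(\xi_0{+}\ve_n y)\, \tilde w_n \;=\; 0,
\]
whose coefficients converge locally uniformly on $\R^N \setminus \{0\}$ to those of $a(\xi_0)\, L_1$, since $\chi \equiv 1$ near $\xi_0$. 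Local Schauder estimates combined with the pointwise bound $|\tilde w_n(y)| \leq C|y|^\mu$ supply uniform $C^{2,\alpha}$ bounds on compact subsets of $\R^N \setminus \{0\}$, and Arzelà–Ascoli plus a diagonal argument extract a subsequence converging in $C^{2}_{\loc}(\R^N \setminus \{0\})$ to a limit $\tilde w$ satisfying $L_1 \tilde w = 0$ on $\R^N \setminus \{0\}$ with $|\tilde w(y)| \leq C|y|^\mu$ for all $y$.

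Since $\mu < 0$, the growth estimate places $\tilde w$ in the space $C^{2,\alpha}_{\mu,0}(\R^N \setminus \{0\})$ (indeed even in $C^{2,\alpha}_{\mu,\mu}$), so Proposition \ref{inj-prop-1} forces $\tilde w \equiv 0$. On the other hand, $C^{2}_{\loc}$ convergence on the compact sphere $\partial B_1$ preserves the pointwise lower bound, giving $\sup_{|y|=1}|\tilde w(y)| \geq 1/C > 0$, a contradiction. The main obstacle I anticipate is simply organizing the argument so that the inner rescaling is compatible with Lemma \ref{7.2}: one must verify that the pointwise bound $|\tilde w_n(y)| \leq C|y|^\mu$ holds throughout the expanding domain $\{|y| \leq \sigma/\ve_n\}$ and that the bound at $|y| = 1$ survives passage to the limit. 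The Dirichlet condition on $\partial\Omega$ never reappears in the limit equation because $\partial\Omega$ is pushed off to infinity in rescaled coordinates, its effect having been absorbed already into the estimate of Lemma \ref{7.2}.
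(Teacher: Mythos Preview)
There is a genuine gap at the step where you invoke Lemma~\ref{7.2}. That lemma is an estimate on the \emph{exterior} region $\Omega_{\ve_n}=\Omega\setminus B_{\ve_n}(\xi_0)$ only: it bounds the weighted norm of $w_n$ restricted to $\Omega_{\ve_n}$, not the full norm on $\Omega\setminus\Sigma$. Hence the displayed inequality $1=\|w_n\|_{2,\alpha,\mu}\leq C\,\ve_n^{-\mu}\|w_n\|_{C^0(\partial B_{\ve_n}(\xi_0))}$ does not follow. Your normalization $\|w_n\|_{2,\alpha,\mu}=1$ does yield the uniform upper bound $|\tilde w_n(y)|\leq C|y|^\mu$ throughout the rescaled domain (this part is correct, and in fact cleaner than the paper's route to the same upper bound), but it does \emph{not} pin down where the norm is realized. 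If the supremum defining $\|w_n\|_{2,\alpha,\mu}$ is attained at a scale $s_n=o(\ve_n)$, then Lemma~\ref{7.2} applied on the exterior gives $\ve_n^{-\mu}\|w_n\|_{C^0(\partial B_{\ve_n})}\to 0$, and after rescaling by $\ve_n$ the mass escapes toward the origin: $\tilde w_n\to 0$ in $C^2_{\loc}(\R^N\setminus\{0\})$ and no contradiction is reached.

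The paper closes precisely this gap with a \emph{second} blow-up. Working with the inner weighted quantity $S_\ell=\sup_{B_2}\big(|y|^{-\mu}|\tilde w_\ell|+|y|^{1-\mu}|\nabla\tilde w_\ell|\big)$ and assuming it is unbounded, one rescales once more at the concentration point $x_\ell$; one first argues that $r_\ell:=|x_\ell|\to 0$, and since $r_\ell^{2}\,u_1^{p-1}(r_\ell y)\to A_p|y|^{-2}$, the second limit solves the model equation $\Delta v+A_p|x|^{-2}v=0$ on $\R^N\setminus\{0\}$ with $|v(x)|\leq C|x|^\mu$. Proposition~\ref{inj-prop-2} (not Proposition~\ref{inj-prop-1}) then forces $v\equiv 0$, contradicting the normalization at the inner scale. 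Only after this step is secured can one pass to the limit in $L_1$ and invoke Proposition~\ref{inj-prop-1}. You should either reproduce this second blow-up, or prove directly an interior analogue of Lemma~\ref{7.2} on $B_{\ve_n}(\xi_0)\setminus\{\xi_0\}$ that controls the weighted norm there by its trace on $\partial B_{\ve_n}(\xi_0)$ --- which is, in effect, what the paper's second blow-up establishes.
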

\begin{proof} We assume by contradiction that   $L_{\bve^\ell}$ is not injective for some $\ve^\ell\to0$. Let $w_\ell \in C^{2,\alpha}_{\mu,\mathcal {D}} (\Omega\setminus\Sigma)$ be a  non-trivial solution to $L_{\bve^\ell}w_\ell=0$. 
 We normalize $w_\ell$ so that \begin{align}\label{normalized-18} \max_{\partial\Omega_{\bve^\ell}} \rho(x)^{-\mu}|w_\ell(x)|=
 (\ve^\ell)^{-\mu}\max_{\partial\Omega_{\bve^\ell}}  |w_\ell(x)|=1,\end{align} where $\rho(x)=|x-\xi_0|$ in a small neighborhood of $\xi_0$, and outside it is  a smooth positive function.  Then by Lemma \ref{7.2} we get  that \begin{align}\label{est-35}\sup_{\Omega_{\bve^\ell}} \left(  \rho(x)^{-\mu}|w_\ell(x)| +\rho(x)^{-\mu+1}|\nabla w_\ell(x)|\right)\leq C.\end{align} 
   We set $$\tilde w_\ell(x)=(\ve^\ell)^{-\mu} w_\ell(\ve^\ell x+\xi_0),\quad |x|<R_\ell:=\frac{\sigma}{2\ve^\ell}.$$ Then $$\D\tilde w_\ell (x)+pu_1^{p-1}\tilde w_\ell(x)=f_\ell(x), $$ where  \begin{align*}f_\ell(x)&:=(\ve^\ell)^{2-\mu} \left( hw_\ell -  a^{-1}\nabla a \cdot \nabla w_\ell \right),\quad y:=\ve^\ell x+\xi_0 \\ &=(\ve^\ell)^{2 }h(y)\tilde w_\ell (x)   -  \ve^\ell a(y)^{-1}\nabla a(y) \cdot \nabla \tilde w_\ell(x). \end{align*} It follows from \eqref{est-35} that $\tilde w_\ell\to  \tilde w_\infty$ and $f_\ell\to 0$ in $C^2_{loc}(\R^N\setminus B_1)$  and  $C^1_{loc}(\R^N\setminus B_1)$ respectively. 
   
   Next we show that $\tilde w_\ell$ is bounded in $C^2_{loc}(B_2\setminus \{0\})$. To this end it suffices to prove that $$S_\ell=\sup_{B_2}\left(|x|^{-\mu}|\tilde w_\ell(x)|+|x|^{-\mu+1}|\nabla \tilde w_\ell (x)|\right)\leq C.$$ We assume by contradiction that the above supremum is not uniformly bounded.  Let $0\neq x_\ell \in B_2$  be such that  $$ S_\ell \approx |x_\ell|^{-\mu}|\tilde w_\ell (x_\ell)|+|x_\ell|^{1-\mu}|\nabla \tilde  w_\ell (x_\ell)| .$$  We claim that $|x_\ell|\to 0$. On the contrary, if $x_\ell \to x_\infty\neq0$, then setting  $\bar w_\ell=\frac{w_\ell}{S_\ell}$ we see that $\bar w_\ell\to \bar w_\infty$, where $$ L_1\bar w_\infty=0\quad \text{in }B_2\setminus\{0\},\quad \bar w_\infty\equiv 0\text{ in }B_2\setminus B_1.$$ Therefore, $\bar w_\infty\equiv 0$ in $B_ 2$, which contradicts to $$|x_\infty|^{-\mu}|\bar w_\infty (x_\infty)|+|x_\infty|^{1-\mu}|\nabla\bar w_\infty(x_\infty)|  \approx 1 .$$  Thus we get that $x_\ell\to0$. 
   
   Now we set $$v_\ell(x)=\frac{r_\ell^{-\mu}\tilde w_\ell(r_\ell x)}{S_\ell},\quad r_\ell:=|x_\ell|. $$ Then, for every $\delta>0$ and $\ell$ large we have  $$L_1 v_\ell=o_\ell(1),\quad |x|^\mu |v_\ell| +|x|^{1+\mu}|\nabla v_\ell|\leq C\quad\text{for  }\delta\leq |x|\leq\frac1\delta.$$ Therefore, up to a subsequence, $v_\ell\to v_\infty$ where $v_\infty$ satisfies $$\D v_\infty +\frac{p\kappa(p,N)}{|x|^2}v_\infty=0, \quad |v_\infty (x)|\leq C|x|^{\mu}\quad\text{in }\R^N\setminus\{0\}.$$ Hence, by Proposition \ref{inj-prop-2} we have $v_\infty\equiv 0 $, a contradiction to $\max_{\partial B_1} (|v_\infty|+|\nabla  v_\infty|)\approx 1 $.  This proves that $S_\ell\leq C$, and consequently we obtain that $\tilde w_\ell\to \tilde w_\infty$ in $C^1_{loc}(\R^N\setminus\{0\})$. Then the limit function $\tilde w_\infty$ would satisfy $$\D \tilde w_\infty+p u_1^{p-1}\tilde w_\infty=0\quad\text{in }\R^N\setminus\{0\},\quad \tilde w_\infty\in C^{2,\alpha}_{\mu,\mu}(\R^N\setminus\{0\}).$$ Then by Proposition \ref{inj-prop-1} we have $\tilde w_\infty\equiv 0 $, a contradiction to \eqref{normalized-18}. 


 \end{proof}

  \subsection{Uniform surjectivity  of $L_{\bve}$ on $C^{2,\alpha}_{\mu,\mathcal {D}}(\Omega\setminus\Sigma)$} 
  Instead of using  general theory of edge operators as developed in \cite{mazzeoEdge},  we shall use the notes of Pacard \cite{pacard, pacard2} and Pacard-Rivi\`ere \cite{Pacard-Riviere} for edge operators with point singularity.  
Denoting $\rho(x):=|x-\xi_0|$,  the weighted  space $L^2_{\delta}(\Omega\setminus \Sigma)$ is defined by     (we may also simply write $L^2_\delta$ or $L^2_\delta(\Omega)$)
  $$L^2_{\delta}(\Omega\setminus \Sigma):=\left\{ w\in L^2_{loc}(\Omega\setminus\Sigma): \int_{\Omega} \rho^{-2-2\delta}|w|^2 dx<\infty \right\}.$$ Let $L^2_{-\delta}(\Omega\setminus\Sigma)$ be the dual of $L^2_{\delta}(\Omega\setminus \Sigma)$ with respect to the pairing $$L^2_{\delta}(\Omega\setminus \Sigma)\times L^2_{-\delta}(\Omega\setminus \Sigma)\,\ni\, (w_1,w_2)\longrightarrow \int_{\Omega}w_1w_2 \rho^{-2}dx.$$ We note that  the following embedding is continuous  $$C^{k,\alpha}_{\gamma}(\Omega\setminus\Sigma) \hookrightarrow L^2_{\delta}(\Omega\setminus\Sigma)\quad\text{for }\delta<\gamma+\frac{N-2}{2}.$$  
  
  \begin{lem}  Let $w\in L^2_\delta$ be a solution to $$L_\ve w=0\quad\text{in }\Omega\setminus\{\xi_0\}.$$ Then $w\in C^{2,\alpha}_{\delta-\frac{N-2}{2}}(\tilde\Omega)$ for every  $\tilde\Omega\Subset\Omega$.  \end{lem}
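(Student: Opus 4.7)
The proof combines interior Schauder estimates away from the singularity with a scaling argument on dyadic annuli around $\xi_0$. Since $L_\bve$ has smooth coefficients on $\bar\Omega\setminus\{\xi_0\}$ and $w \in L^2_{\mathrm{loc}}(\bar\Omega\setminus\{\xi_0\})$, standard Calder\'on--Zygmund and Schauder theory give $w \in C^{2,\alpha}_{\mathrm{loc}}(\bar\Omega\setminus\{\xi_0\})$ together with a quantitative bound on $\tilde\Omega \setminus N_{\sigma/2}$, which controls the non-weighted part of $\|w\|_{C^{2,\alpha}_\nu}$. The entire task therefore reduces to establishing the correct weighted decay as $x \to \xi_0$, with weight $\nu := \delta - \tfrac{N-2}{2}$.

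I would cover a small punctured neighborhood of $\xi_0$ by dyadic annuli $A_r := B_{2r}(\xi_0) \setminus B_r(\xi_0)$ and rescale on each. Setting
\[
\tilde w_r(y) := r^{-\nu} w(\xi_0 + r y), \qquad 1 \leq |y| \leq 2,
\]
a change of variables together with the identity $N + 2\nu = 2 + 2\delta$ and $\rho \sim r$ on $A_r$ yields
\[
\int_{\{1\le|y|\le 2\}} |\tilde w_r|^2 \, dy \;=\; r^{-N-2\nu}\int_{A_r}|w|^2\,dx \;\lesssim\; \int_{A_r}\rho^{-2-2\delta}|w|^2\,dx,
\]
which is the contribution of $A_r$ to the finite norm $\|w\|_{L^2_\delta}$; it is uniformly bounded in $r$ and in fact vanishes as $r \to 0$ by absolute continuity of the defining integral.

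Next I would check that $\tilde w_r$ satisfies a uniformly elliptic equation on the unit annulus with coefficients bounded independently of $r$. Dividing $L_\bve w = 0$ by $a$ and expressing it in $y$-coordinates gives
\[
\Delta \tilde w_r + r\,\tfrac{\nabla a}{a}(\xi_0 + ry)\cdot \nabla \tilde w_r + \bigl[ p\, r^2\, \bar u_\bve^{p-1}(\xi_0 + ry) - r^2 h(\xi_0 + ry) \bigr]\tilde w_r = 0.
\]
By the asymptotic $\bar u_\bve(x) \sim c_p|x-\xi_0|^{-2/(p-1)}$ from Theorem \ref{exists1}, the potential satisfies $r^2 \bar u_\bve^{p-1}(\xi_0 + ry) = O(|y|^{-2})$, hence is bounded on $\{1 \leq |y| \leq 2\}$ independently of $r$; the remaining coefficients are $O(r)$ or $O(r^2)$. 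Classical interior elliptic regularity on any slightly smaller annulus then furnishes
\[
\|\tilde w_r\|_{C^{2,\alpha}(\{5/4 \leq |y|\leq 7/4\})} \;\leq\; C\|\tilde w_r\|_{L^2(\{1\le|y|\le2\})},
\]
with $C$ independent of $r$. Unscaling converts this into the pointwise bounds on $|w|$, $|\nabla w|$, $|\nabla^2 w|$ and the H\"older seminorm of $\nabla^2 w$ with exactly the weights prescribed by \eqref{seminorm}, and a dyadic covering of the punctured ball around $\xi_0$ reconstitutes $\|w\|_{C^{2,\alpha}_\nu(\tilde\Omega)}$.

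The one subtle point is the compatibility between the $|x-\xi_0|^{-2}$ singularity of the potential $p \bar u_\bve^{p-1}$ and the natural scaling of the Laplacian: the factor $r^{-2}$ absorbed by $\Delta$ under rescaling exactly matches the singularity of the potential, rendering the rescaled equation uniformly elliptic with bounded coefficients. Without this precise matching, the blow-up argument could not close uniformly in $r$; with it, no machinery beyond classical local elliptic theory is required.
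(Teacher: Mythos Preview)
Your proof is correct and follows essentially the same approach as the paper: rescale on small balls (or annuli) centered near $\xi_0$, use the $L^2_\delta$ assumption to control the rescaled $L^2$ norm, apply interior $L^2\to C^{2,\alpha}$ elliptic regularity (the paper's Lemmas \ref{L2} and \ref{Schauder}) to the rescaled equation with uniformly bounded coefficients, and unscale. The only minor point is that your justification ``$\bar u_\bve(x)\sim c_p|x-\xi_0|^{-2/(p-1)}$'' from Theorem \ref{exists1} is an asymptotic at the origin, whereas you need the global bound $r^2 u_1^{p-1}(r)\le C$; this follows immediately from the asymptotics at $0$ and at $\infty$ in Theorem \ref{exists1} together with continuity, but it is worth saying explicitly.
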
 
  \begin{proof} For $x_0\in\Omega$  with $d(x_0,\partial\Omega)\geq |x_0-\xi_0|$ we set $$v(x)=w(x_0+Rx),\quad R=\frac12|x-\xi_0|,\quad |x|\leq 1.$$ Then using the elliptic regularity for $v$, namely, $$\|v\|_{C^0(B_\frac12)}\leq C\|v\|_{L^2(B_1)},$$ one obtains that $|w(x)|\leq C|x|^{\delta-\frac{n-2}{2}}$ for $x$ in a small neighborhood of $\xi_0$. In fact, by elliptic regularity, this estimate also holds on compact sets in $\Omega$. The lemma follows   by a scaling argument  and Schauder regularity.  \end{proof}
  
  The natural domain $D(L_\bve)$ of the operator $L_\bve$ is the set of functions $w\in L^2_\delta$  such that the distributional derivative   $L_\bve w $  is in $L^2_{\delta-2}$. More precisely, $w\in D(L_\ve)$ if there exists $f\in L^2_{\delta-2}$ such that $w$ satisfies $L_\ve w=f$ in    the sense of distributions in $\Omega\setminus\Sigma$.  However, in order to identify the adjoint of $L_\bve$ in a natural way, one has to consider a smaller space including the boundary condition $w=0$ on $\partial \Omega$, which well-defined as a trace according to the next estimate. 
  
  Together with Lemma \ref{L2} and  a rescaling argument (see e.g., \cite[Proposition 1.2.1]{pacard}) one can show that  the following elliptic estimate holds:  for $r_0>0$ with $ B_{2r_0}(\xi_0)\subset\Omega$ \begin{align}\label{elliptic-est}\sum_{\ell=1}^2 \|\nabla^\ell w\|_{L^2_{\delta-\ell}( B_{r_0}(\xi_0))}\leq C(\|f\|_{L^2_{\delta-2}( B_{2r_0}(\xi_0))} +\|w\|_{L^2_\delta( B_{2r_0}(\xi_0)) }). \end{align}  In our next lemma  we bound the  weighted norm $ \|w\|_{L^2_\delta}$ by $L^2$ norm of $w$ and the  weighted norm $ \|f\|_{L^2_{\delta-2}}$  for some values of $\delta$. 
  
  \begin{lem} Assume that  $\delta-\frac{N-2}{2}\not\in\{ \Re{\gamma_j^{\pm}}:j=0,1,\dots\}$.  Then  there exists a compact set $K\subset\bar\Omega\setminus \{\xi_0\}$ and $r>0$ such that \begin{align} \| w\|_{L^{2}_{\delta}(B_r(\xi_0))}\leq C(\ve) (\| f\|_{L^{2}_{\delta-2}( \Omega)}+\|w\|_{L^{2}(K)}). \label{est-26}\end{align}   \end{lem}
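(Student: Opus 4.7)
The plan is to argue by contradiction, combining a rescaling centred at $\xi_0$ with a spherical harmonic analysis of the limiting model operator $\Delta + A_p/|y|^2$. Fix $\bar r > 0$ small and put $K := \bar B_{\bar r}(\xi_0) \setminus B_{\bar r/2}(\xi_0)$. If \eqref{est-26} failed for this $K$ and $r = \bar r$, we would obtain sequences $w_k \in D(L_\ve)$ and $f_k = L_\ve w_k$ with $\|w_k\|_{L^2_\delta(B_{\bar r})} = 1$ and $\|f_k\|_{L^2_{\delta-2}(\Omega)} + \|w_k\|_{L^2(K)} \to 0$. Using \eqref{elliptic-est} on each dyadic annulus away from $\xi_0$ together with Schauder regularity, I would extract a subsequence converging in $C^{2,\alpha}_{\loc}(\bar\Omega \setminus \{\xi_0\})$ to some $w_\infty$ which solves $L_\ve w_\infty = 0$, vanishes on $\partial\Omega$, and vanishes on the open set $K$; by unique continuation this forces $w_\infty \equiv 0$ on $\bar\Omega \setminus \{\xi_0\}$.

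Since $w_k \to 0$ locally away from $\xi_0$ while its weighted mass on $B_{\bar r}$ stays equal to $1$, that mass must concentrate at $\xi_0$. I would choose $r_k \to 0$ so that a definite share of the mass sits in the annulus $B_{r_k} \setminus B_{r_k/2}$, and rescale
\begin{equation*}
 v_k(y) := r_k^{(N-2)/2 - \delta}\, w_k(\xi_0 + r_k y),
\end{equation*}
which preserves the weight: $\|v_k\|_{L^2_\delta(B_R)} = \|w_k\|_{L^2_\delta(B_{R r_k}(\xi_0))} \le 1$ for every fixed $R \le \bar r / r_k$, and $\|v_k\|_{L^2_\delta(B_1 \setminus B_{1/2})} \ge c_0 > 0$. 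A direct computation gives
\begin{equation*}
 a(\xi_0 + r_k y)\, \Delta v_k + r_k\, \nabla a \cdot \nabla v_k + r_k^2 a \bigl(p\, \bar u_\ve^{p-1} - h\bigr) v_k = \tilde f_k,
\end{equation*}
with $\tilde f_k(y) := r_k^{(N-2)/2 - \delta + 2}\, f_k(\xi_0 + r_k y)$ of vanishing $L^2_{\delta-2}$ norm on bounded sets. Because $p\, \bar u_\ve^{p-1}(x) \sim A_p / |x - \xi_0|^2$ as $x \to \xi_0$, by Theorem \ref{exists1} and \eqref{Ap}, the coefficient $r_k^2 a\, p\, \bar u_\ve^{p-1}$ converges to $a(\xi_0)\, A_p/|y|^2$, while all other lower-order coefficients vanish. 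Applying \eqref{elliptic-est} rescaled to unit scale in $y$ yields $v_k \to v_\infty$ in $C^{2,\alpha}_{\loc}(\R^N \setminus \{0\})$ with
\begin{equation*}
 \Delta v_\infty + \frac{A_p}{|y|^2}\, v_\infty = 0 \quad \text{in } \R^N \setminus \{0\}, \qquad \|v_\infty\|_{L^2_\delta(\R^N)} \le 1, \qquad v_\infty \not\equiv 0.
\end{equation*}

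Decomposing $v_\infty = \sum_{j \ge 0} V_j(r) \vp_j(\theta)$ into spherical harmonics, each radial profile satisfies an Euler equation with fundamental solutions $r^{\gamma_j^\pm}$. A short calculation shows that a pure power $r^\gamma$ lies in $L^2_\delta(\R^N)$ near $0$ iff $\gamma > \delta - (N-2)/2$ and near infinity iff $\gamma < \delta - (N-2)/2$, so no nontrivial combination of $r^{\gamma_j^\pm}$ is globally in $L^2_\delta$ unless $\delta - (N-2)/2$ coincides with one of the $\Re \gamma_j^\pm$. The hypothesis rules this out, so every $V_j \equiv 0$, forcing $v_\infty \equiv 0$ and contradicting $v_\infty \not\equiv 0$. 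The main obstacle I anticipate is selecting the rescaling radius $r_k$ so that the weighted mass of $v_k$ is anchored in a fixed annulus rather than escaping to $0$ or $\infty$; this is precisely what promotes the local $C^{2,\alpha}$ convergence of $v_k$ into the global $L^2_\delta$ control on $v_\infty$ needed to kill both asymptotes $r^{\gamma_j^\pm}$ of every mode simultaneously.
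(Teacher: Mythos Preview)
Your approach is quite different from the paper's. The paper does not argue by contradiction or rescale at all: it rewrites $L_\ve w = f$ as $\Delta w + \dfrac{\zeta(x)}{|x-\xi_0|^2}w = \tilde f$ with $\zeta(x) := p|x-\xi_0|^2\bar u_\ve^{p-1}(x)$ continuous and $\zeta(\xi_0) = A_p$, where $\tilde f$ collects $f/a$, the drift $a^{-1}\nabla a \cdot \nabla w$, and the zero-order term $hw$. It then applies the ready-made weighted estimate for this model operator (Lemma~\ref{lem-crucial}, itself a perturbation of Lemma~\ref{lem-7.1}) and absorbs the extra pieces of $\tilde f$: on $B_r(\xi_0)$ one has $\|hw\|_{L^2_{\delta-2}} \le Cr^2\|w\|_{L^2_\delta}$ and $\|\nabla a\cdot\nabla w\|_{L^2_{\delta-2}}\le Cr\|\nabla w\|_{L^2_{\delta-1}}\le Cr(\|f\|_{L^2_{\delta-2}}+\|w\|_{L^2_\delta})$ via \eqref{elliptic-est}. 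For $r$ small the $\|w\|_{L^2_\delta}$ contributions are absorbed on the left; what remains on the intermediate annulus becomes the compact term $\|w\|_{L^2(K)}$. No unique continuation, no blow-up, no limiting equation.

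Your blow-up route has a genuine gap exactly where you flag it. In contrast to the weighted $C^0$ setting of Lemmas~\ref{7.2}--\ref{inj-omega}, an $L^2_\delta$ norm is not automatically anchored at a single scale: a priori the dyadic masses $\|w_k\|^2_{L^2_\delta(B_{2^{-j}\bar r}\setminus B_{2^{-j-1}\bar r})}$ could each be $o(1)$ while summing to $1$, and then every rescaled limit $v_\infty$ vanishes and your spherical-harmonic step is never reached. Ruling this spreading out is possible---it rests on the uniform spectral gap $\min_j\bigl|\Re\gamma_j^\pm-(\delta-\tfrac{N-2}{2})\bigr|>0$, which forces the dyadic mass of any $L^2_\delta$ homogeneous model solution on a ball to be geometrically concentrated at the \emph{outer} shell---but making that precise is essentially the content of Lemma~\ref{lem-7.1}. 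Once that lemma is in hand, the paper's direct perturbation argument is shorter and also sidesteps the unique-continuation detour.
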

  \begin{proof}  Let $R>0$ be such that $B_{4R}(\xi_0)\subset\Omega$. Applying  Lemma \ref{lem-crucial} on the ball $B_R(\xi_0)$ we get that $$\|u\|_{L^2_\delta(B_R(\xi_0))} \leq C_1\left[\|h\|_{L^2_{\delta-2}(B_R(\xi_0))}\|+\|\nabla a\cdot \nabla w\|_{L^2_{\delta-2}(B_R(\xi_0))}+\|u\|_{L^2(K)}\right],$$ for some $C_1>0$ and  for some  compact set $K\subset \bar B_R(\xi_0)\setminus\{\xi_0\}$. Since $\|\nabla a\|\in L^\infty(\Omega)$, it follows that  $$\lim_{r\to 0}\frac{\|\nabla a\cdot \nabla w\|_{L^2_{\delta-2}(B_r(\xi_0))}}{\| \nabla w\|_{L^2_{\delta-1}(B_r(\xi_0))}}=0.$$ Therefore, for $r>0$ small enough, the weighted norm $ \|\nabla a\cdot \nabla w\|_{L^2_{\delta-2}(B_r(\xi_0))}$ can be absorbed one the left hand side, thanks to \eqref{elliptic-est}. On the region $B_R(\xi_0)\setminus B_r(\xi_0)$, the weighted norm $\|\nabla a\cdot \nabla w\|_{L^2_{\delta-2}}$ is equivalent to  $ \|\nabla a\cdot \nabla w\|_{L^2}$, and this can be controlled by $$\|w\|_{L^2(B_{2R}(\xi_0)\setminus B_\frac r2(\xi_0)}+\|f\|_{L^2(B_{2R}(\xi_0)\setminus B_\frac r2(\xi_0)}. $$
  We conclude the lemma. 
   \end{proof}

  As a consequence of \eqref{elliptic-est}-\eqref{est-26} one can prove the following lemma (see e.g.,  Chapter 9, \cite{pacard2}). 
  \begin{lem} The operator $L_\ve:L^2_{\delta}\to L^2_{\delta-2}$ is Fredholm, provided    $\delta-\frac{N-2}{2}\not\in\{ \Re{\gamma_j^{\pm}}:j=0,1,\dots\}$.    \end{lem}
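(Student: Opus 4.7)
The plan is to deduce the Fredholm property from the two a priori bounds \eqref{elliptic-est} and \eqref{est-26} via the classical scheme: an elliptic estimate of the form ``norm $\leq$ norm of image $+$ compact norm'' gives semi-Fredholmness, and a duality argument covers the cokernel.

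First I would upgrade \eqref{elliptic-est} and \eqref{est-26} to a global weighted estimate
\[
\|w\|_{L^2_{\delta}(\Omega)} + \|\nabla w\|_{L^2_{\delta-1}(\Omega)} + \|\nabla^2 w\|_{L^2_{\delta-2}(\Omega)} \leq C\bigl(\|L_\ve w\|_{L^2_{\delta-2}(\Omega)} + \|w\|_{L^2(K)}\bigr)
\]
for every $w$ in the natural domain $D(L_\ve)$ satisfying $w=0$ on $\partial\Omega$, where $K\Subset\bar\Omega\setminus\{\xi_0\}$ is the compact set supplied by \eqref{est-26}. Near $\xi_0$ one uses \eqref{est-26} combined with \eqref{elliptic-est}; away from $\xi_0$ the weights are equivalent to unweighted ones and one invokes interior plus boundary Schauder estimates (the latter using the Dirichlet condition on $\partial\Omega$). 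The hypothesis $\delta-\frac{N-2}{2}\notin\{\Re\gamma_j^\pm\}$ enters only through \eqref{est-26}.

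Next, I would deduce semi-Fredholmness. The embedding of the graph-norm space into $L^2(K)$ is compact by Rellich's theorem, because the global estimate gives $H^1$-control on a neighborhood of $K$. A standard extraction argument (if $w_n\in\ker L_\ve$ has $\|w_n\|_{L^2_\delta}=1$, then up to a subsequence $w_n$ is Cauchy in $L^2(K)$, hence Cauchy in $L^2_\delta$ by the estimate) shows that $\ker L_\ve$ is finite dimensional and that $L_\ve$ has closed range.

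For the cokernel I would use duality with respect to the pairing $(w_1,w_2)\mapsto\int_\Omega w_1w_2\,\rho^{-2}dx$ introduced before Lemma~\ref{L2}. The formal adjoint $L_\ve^\ast$ with respect to this pairing is an elliptic operator of exactly the same type acting on the dual spaces $L^2_{2-\delta}\to L^2_{-\delta}$, because $L_\ve$ is formally self-adjoint in the ordinary $L^2(\Omega,dx)$ sense (it is $-\mathrm{div}(a\nabla\cdot)+a(h-p\bar u_\ve^{p-1})$). The crucial observation is that the indicial roots satisfy $\gamma_j^+ +\gamma_j^-=2-N$, so the condition
\[
\delta-\tfrac{N-2}{2}\notin\{\Re\gamma_j^\pm:j\geq 0\}
\]
is invariant under $\delta\mapsto 2-\delta$. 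Hence the same global estimate applies to $L_\ve^\ast$ on $L^2_{2-\delta}$, which yields finite-dimensional cokernel. Combining the two steps gives that $L_\ve:L^2_\delta\to L^2_{\delta-2}$ is Fredholm.

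The only real point of care is the symmetry of the indicial condition under $\delta\mapsto 2-\delta$; once that is noted, together with the density of $C^\infty_c(\Omega\setminus\Sigma)$ in $D(L_\ve)$ needed to make the duality rigorous, the argument is a routine application of the Lockhart--McOwen / Mazzeo scheme, and everything else follows from \eqref{elliptic-est} and \eqref{est-26} plus Rellich compactness.
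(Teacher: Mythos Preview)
Your strategy is exactly the standard one the paper defers to (it only cites Pacard's notes, Chapter~9): combine \eqref{elliptic-est}--\eqref{est-26} into a global estimate with a compact remainder, deduce finite kernel and closed range, then handle the cokernel by duality. So the approach matches.

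There is one small slip in the duality step. The cokernel of $L_\ve:L^2_\delta\to L^2_{\delta-2}$ is, via $g\mapsto \rho^{-2}g$, identified with $\ker\bigl(L_\ve:L^2_{-\delta}\to L^2_{-\delta-2}\bigr)$, so the relevant symmetry is $\delta\mapsto-\delta$, not $\delta\mapsto 2-\delta$. Concretely, since $\gamma_j^++\gamma_j^-=2-N$, the set $\{\Re\gamma_j^\pm\}$ is symmetric about $\tfrac{2-N}{2}$, and one checks that
\[
\delta-\tfrac{N-2}{2}\notin\{\Re\gamma_j^\pm\}\quad\Longleftrightarrow\quad -\delta-\tfrac{N-2}{2}\notin\{\Re\gamma_j^\pm\},
\]
whereas the map $\delta\mapsto 2-\delta$ does \emph{not} preserve the condition (a quick numerical check with, say, $N=4$ shows this). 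Equivalently, if you insist on working with $L_\ve^\ast=\rho^2L_\ve(\rho^{-2}\,\cdot\,)$ on $L^2_{2-\delta}$, note that its indicial roots are $\gamma_j^\pm+2$, and the shifted condition at weight $2-\delta$ again reduces to the $-\delta$ condition above. With this correction the duality argument goes through and the proof is complete.
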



    We shall fix $\delta>0 $ slightly bigger than $\mu+\frac{N-2}{2}$, 
      where $\mu$ is fixed according to \eqref{mu-nu}. Thanks to the previous comment on the domain of $L_\bve$, the adjoint of the operator \begin{align}\label{op}L_\bve :L^2_{-\delta}\to L^2_{-\delta-2}\end{align} is given by \begin{align} \label{op-adj}L^2_{\delta+2}\to L^2_{\delta},\quad  w\mapsto \rho^2L_\bve(w\rho^{-2}).\end{align} Then the adjoint operator    \eqref{op-adj} is injective, and $L_\bve$ in \eqref{op} is surjective.  Using the isomorphism  $$\rho^{2\delta}:L^2_{\tilde \delta}\to L^{2}_{2\delta+\tilde\delta},\quad w\mapsto \rho^{2\delta}w,$$ we identify the adjoint operator as $$L_\bve^*: L^2_{-\delta+2}\to L^2_{-\delta},\quad w\mapsto \rho^{2-2\delta}L_\bve(w\rho^{2\delta-2}) .$$  Now we consider  the composition $$\LL=L_\bve \circ L_\bve^* :L_{-\delta+2}^2\to L_{-\delta-2}^2, \quad w\mapsto L_\bve[\rho^{2-2\delta}L_\bve(w\rho^{2\delta-2})].$$ Then $\LL$ is an isomorphism, and hence there exists a two sided inverse  $$\GG_\bve:L^2_{-\delta-2}\to L^2_{-\delta+2}.$$ Consequently, the right inverse of $L_\bve$ is given by $G_\bve:=L_\bve^*\GG_\bve$.  It follows   that $$G_\bve: C^{0,\alpha}_{\nu-2} (\Omega\setminus\Sigma)\to C^{2,\alpha}_{\nu,\mathcal{D}}(\Omega\setminus\Sigma)$$
is bounded.

\begin{lem} Let $\ve_0>0$ be as in Lemma \ref{inj-omega}. Then for $0<\ve<\ve_0$  the system  $L_\bve w_1=0$, $w_1=L_\bve^* w_2$ with $w_1\in C^{2,\alpha}_{\nu,\mathcal{D}}(\Omega\setminus\Sigma)$ and $w_2\in C^{4,\alpha}_{\nu+2,\mathcal{D}}(\Omega\setminus\Sigma)$ has only   trivial solution.  \end{lem}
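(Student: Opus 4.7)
The plan is to collapse the two equations of the system into a single fourth-order equation $\LL w_2 = 0$ and then exploit the isomorphism property of $\LL = L_\bve \circ L_\bve^*$ that was established immediately above the statement.

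First, substituting $w_1 = L_\bve^* w_2$ into the equation $L_\bve w_1 = 0$ gives
\[
\LL w_2 \;=\; L_\bve L_\bve^* w_2 \;=\; 0,
\]
an identity that holds pointwise because $w_2 \in C^{4,\alpha}_{\nu+2,\mathcal{D}}(\Omega\setminus\Sigma)$ makes both compositions classical. The goal is now to invoke the two-sided inverse $\GG_\bve$ of
\[
\LL \colon L^{2}_{-\delta+2}(\Omega\setminus\Sigma) \longrightarrow L^{2}_{-\delta-2}(\Omega\setminus\Sigma),
\]
which, once applicable, forces $w_2 \equiv 0$ and hence $w_1 = L_\bve^* w_2 \equiv 0$.

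The only step that is not a direct substitution is verifying that $w_2$ really lies in the domain $L^{2}_{-\delta+2}(\Omega\setminus\Sigma)$ of the isomorphism. For this I would use the continuous embedding $C^{k,\alpha}_{\gamma}(\Omega\setminus\Sigma) \hookrightarrow L^{2}_{\tilde\delta}(\Omega\setminus\Sigma)$, valid for $\tilde\delta < \gamma + \frac{N-2}{2}$, recorded in the subsection above. With $\gamma = \nu+2$ it yields $w_2 \in L^{2}_{\tilde\delta}$ for every $\tilde\delta < \nu + \frac{N+2}{2}$. The choice $\tilde\delta = -\delta+2$ is therefore permitted as soon as $\delta > -\nu - \frac{N-2}{2}$, and the identity $\mu + \nu = 2-N$ from \eqref{mu-nu} rewrites this condition as $\delta > \mu + \frac{N-2}{2}$, which is precisely the way $\delta$ was fixed just before \eqref{op}. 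Hence $w_2 \in L^{2}_{-\delta+2}$, the isomorphism kills $w_2$, and consequently $w_1 = 0$.

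The main (and essentially only) obstacle is the weight book-keeping: one must see that the Hölder weight $\nu+2$ imposed on $w_2$ translates, via the embedding into weighted $L^{2}$, into exactly the weight $-\delta+2$ on which $\LL$ is known to be invertible, and this is arranged by the duality relation $\mu + \nu = 2-N$ together with the choice of $\delta$. No further maximum principle, indicial root analysis, or blow-up argument is needed beyond what is already in place; once the embedding is checked, the conclusion is a one-line consequence of the isomorphism theorem for $\LL$.
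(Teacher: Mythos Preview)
Your argument is correct. You embed $w_2\in C^{4,\alpha}_{\nu+2,\mathcal D}$ into $L^2_{-\delta+2}$ via the weight bookkeeping $\delta>\mu+\tfrac{N-2}{2}=-\nu-\tfrac{N-2}{2}$ and then invoke the $L^2$-isomorphism of $\LL$ asserted just before the lemma; since that isomorphism is obtained from the Fredholm and duality structure of $L_\bve$ in the $L^2_\delta$-scale (and does not rely on the present lemma), there is no circularity.

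The paper takes a slightly different, more hands-on route: it sets $w=\rho^{2\delta-2}w_2$, rewrites the system as $L_\bve[\rho^{2-2\delta}L_\bve w]=0$, integrates by parts against $w$ to obtain $\int_\Omega\rho^{2-2\delta}|L_\bve w|^2=0$, hence $L_\bve w=0$, and then concludes $w=0$ from the H\"older injectivity Lemma~\ref{inj-omega} using the weight inequality $\nu+2\delta>\mu$. So the paper essentially re-proves, in this specific H\"older context, the injectivity half of the $\LL$-isomorphism that you simply quote. Your approach is shorter and cleaner once the $L^2$ isomorphism is in hand; the paper's approach has the advantage of being self-contained (it makes the integration-by-parts step and the use of Lemma~\ref{inj-omega} explicit), which is also what is replayed in the blow-up argument of the subsequent uniform-bound lemma.
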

\begin{proof} We set $w=\rho^{2\delta-2}w_2$. Then $L_\bve[\rho^{2-2\delta} L_\bve w]=0$. Multiplying the equation by $w$ and then integrating by parts we get $$0=\int_{\Omega}\rho^{2-2\delta}|L_\bve w|^2dx.$$ Since $\nu+2\delta>\mu$, we have  $w\in C^{2,\alpha}_{\nu+2\delta}(\Omega\setminus\Sigma)\subset C^{2,\alpha}_{\mu}(\Omega\setminus \Sigma)$. Then by  Lemma \ref{inj-omega} we get that $w=0$, equivalently  $w_1=w_2=0$.  \end{proof}

\begin{lem}  There exists $\ve_0>0 $ small such that  if $0<\ve<\ve_0$, then the  sequence of solutions $(w_{1,\bve^\ell})\subset C^{2,\alpha}_{\nu,\mathcal{D}}(\Omega\setminus\Sigma) \cap L^*_{\bve^\ell}[C^{4,\alpha}_{\nu+2,\mathcal{D}}(\Omega\setminus\Sigma)]$  to  $L_{\bve^\ell}w_{1,\bve^\ell}=f_{\bve^\ell}$ is uniformly bounded in $C^{2,\alpha}_{\nu}(\Omega\setminus\Sigma)$, provided $(f_{\bve^\ell})$ is uniformly bounded in $C^{0,\alpha}_{\nu-2}(\Omega\setminus\Sigma)$. \end{lem}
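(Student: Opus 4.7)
My plan is to argue by contradiction via a blow-up argument, in the same spirit as the proof of Lemma \ref{inj-omega}, now additionally exploiting the image-of-adjoint constraint on $w_{1,\bve^\ell}$. Suppose the conclusion fails: there exist sequences $\ve^\ell \to 0$ and solutions $w_\ell := w_{1,\bve^\ell} = L_{\bve^\ell}^{*} w_{2,\ell}$, with $w_{2,\ell} \in C^{4,\alpha}_{\nu+2,\mathcal{D}}(\Omega\setminus\Sigma)$, to $L_{\bve^\ell} w_\ell = f_\ell$, satisfying $\|f_\ell\|_{C^{0,\alpha}_{\nu-2}} \le 1$ and $\|w_\ell\|_{C^{2,\alpha}_\nu} \to \infty$. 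After renormalizing, we may assume $\|w_\ell\|_{C^{2,\alpha}_\nu} = 1$ and $\|f_\ell\|_{C^{0,\alpha}_{\nu-2}} \to 0$.

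\textbf{Step 1 (reduction to the singular scale).} Since $2-N < \nu < 0$, applying Lemma \ref{7.2} with $\gamma = \nu$ would yield
$$1 \;=\; \|w_\ell\|_{2,\alpha,\nu} \;\le\; C\bigl(\|f_\ell\|_{0,\alpha,\nu-2} + (\ve^\ell)^{-\nu}\|w_\ell\|_{C^0(\partial B_{\ve^\ell}(\xi_0))}\bigr),$$
so the $C^{2,\alpha}_\nu$ norm must concentrate near $\xi_0$; more precisely $(\ve^\ell)^{-\nu}\|w_\ell\|_{C^0(\partial B_{\ve^\ell}(\xi_0))} \gtrsim 1$. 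The supremum defining this norm is then realized either at the singular scale $\ve^\ell$, or at an intermediate scale $r_\ell$ with $\ve^\ell \ll r_\ell \ll 1$.

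\textbf{Step 2 (blow-up).} In the intermediate-scale regime, I would rescale by $r_\ell$ and use the fact that $r^2 u_1^{p-1}(r) \to k(p,N)$ as $r \to 0$ to pass to a nontrivial limit $v_\infty \in C^{2,\alpha}_{\nu,\nu}(\R^N\setminus\{0\})$ solving $\Delta v_\infty + A_p|x|^{-2} v_\infty = 0$; since $\nu$ is not an indicial root of $\Delta + A_p|x|^{-2}$, Proposition \ref{inj-prop-2} would then force $v_\infty \equiv 0$, contradicting the nontriviality produced by Step 1. In the singular-scale regime I set
$$\tilde w_\ell(x) := (\ve^\ell)^{-\nu} w_\ell(\xi_0 + \ve^\ell x), \qquad \tilde w_{2,\ell}(x) := (\ve^\ell)^{-\nu-2} w_{2,\ell}(\xi_0 + \ve^\ell x).$$
Using $\mu+\nu = 2-N$ together with the choice $\delta \approx \mu + (N-2)/2$, the identity $w_\ell = L_{\bve^\ell}^{*} w_{2,\ell}$ is scale-invariant. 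Passing to the limit via local Schauder estimates and Arzel\`a--Ascoli yields $L_1\tilde w_\infty = 0$ and $\tilde w_\infty = L_1^{*}\tilde w_{2,\infty}$, with $\tilde w_\infty \in C^{2,\alpha}_{\nu,\nu}$ and $\tilde w_{2,\infty} \in C^{4,\alpha}_{\nu+2,\nu+2}$ on $\R^N\setminus\{0\}$.

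\textbf{Step 3 (killing the limit).} Substituting $u := \rho^{2\delta-2}\tilde w_{2,\infty}$, the identity $\nu + 2\delta = \mu$ gives $u \in C^{4,\alpha}_{\mu,\mu}(\R^N\setminus\{0\})$ and $\tilde w_\infty = \rho^{2-2\delta} L_1 u$. Hence $L_1(\rho^{2-2\delta} L_1 u) = 0$; testing against $u$ and integrating by parts on $\R^N\setminus\{0\}$ would yield
$$\int_{\R^N} \rho^{2-2\delta}\,|L_1 u|^2\,dx \;=\; 0,$$
so $L_1 u \equiv 0$. Since $u \in C^{4,\alpha}_{\mu,\mu} \subset C^{2,\alpha}_{\mu,0}$, Proposition \ref{inj-prop-1} will force $u \equiv 0$, hence $\tilde w_\infty \equiv 0$, contradicting Step 1. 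The main difficulty will be precisely in justifying this integration by parts: the integrand $\rho^{2-2\delta}|L_1 u|^2$ is only borderline integrable at the two ends of $\R^N$, so the argument has to proceed by a careful cutoff/limit procedure, matching the Fredholm bounds \eqref{elliptic-est}--\eqref{est-26} at the rescaled level so that the boundary terms vanish as the cutoffs are removed.
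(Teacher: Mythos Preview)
Your overall strategy (contradiction, blow-up, adjoint structure, integration by parts) is exactly the paper's, but the case analysis in Steps~1--2 is miscalibrated. Lemma~\ref{7.2} only controls the norm on the \emph{exterior} region $\Omega_{\ve^\ell}=\Omega\setminus B_{\ve^\ell}(\xi_0)$, so from $\|w_\ell\|_{C^{2,\alpha}_\nu(\Omega\setminus\Sigma)}=1$ you cannot conclude $(\ve^\ell)^{-\nu}\|w_\ell\|_{C^0(\partial B_{\ve^\ell})}\gtrsim 1$. The paper's dichotomy is on $S_\ell:=(\ve^\ell)^{-\nu}\max_{\partial B_{\ve^\ell}}|w_\ell|$: either $S_\ell$ stays bounded (Case~1) or $S_\ell\to\infty$ (Case~2). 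In Case~1, Lemma~\ref{7.2} shows the exterior norm is bounded, so the concentration must occur at an \emph{inner} scale $r_\ell=o(\ve^\ell)$; rescaling by $r_\ell$ then yields the limit operator $\Delta+A_p|x|^{-2}$ (because $r_\ell/\ve^\ell\to 0$ puts you in the regime where $r^2u_1^{p-1}(r)\to k(p,N)$ as $r\to 0$), and Proposition~\ref{inj-prop-2} kills it. Your ``intermediate scale'' $\ve^\ell\ll r_\ell\ll 1$ is on the wrong side: at such scales $r_\ell x/\ve^\ell\to\infty$, so you are sampling $u_1$ near infinity where $r^2u_1^{p-1}(r)\to 0$, and the rescaled potential vanishes; the limit would be $\Delta$, not $\Delta+A_p|x|^{-2}$. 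In any event that region is already handled by Lemma~\ref{7.2}.

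Your Step~3 is the paper's Case~2 and is essentially right, with two corrections. First, one has $\nu+2\delta>\mu$ \emph{strictly} (since $\delta$ is chosen slightly larger than $\mu+\tfrac{N-2}{2}$ and $\mu+\nu=2-N$), not $\nu+2\delta=\mu$; the inclusion into $C^{2,\alpha}_{\mu,0}$ at infinity then requires $\nu+2\delta<0$, which holds provided $\delta$ is taken close enough to $\mu+\tfrac{N-2}{2}$. Second, the integration by parts is not just a matter of generic cutoffs: it relies on the \emph{improved} decay $\tilde w_\infty=O(|x|^{2-N})$ at infinity. Once $L_1\tilde w_\infty=0$ with $\tilde w_\infty\to 0$, the indicial roots of $L_1$ at infinity (which coincide with those of $\Delta$) force this faster rate, and that is precisely what makes the boundary terms vanish. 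The paper singles this out explicitly; you should too.
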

\begin{proof} Assume by contradiction that the lemma is false. Then there exists a sequence    $\bve^\ell\to 0$   and  $w_{1,\bve^\ell }\in  C^{2,\alpha}_{\nu,\mathcal{D}}(\Omega\setminus\Sigma)\cap L^*_{\bve^\ell}[C^{4,\alpha}_{\nu+4,\mathcal{D}}(\Omega\setminus\Sigma)]$  with   $L_{\bve^\ell}w_{1,\bve^\ell}=f_{\bve^\ell}$ such that $\|f_{\bve^\ell}\|_{C^{0,\alpha}_{\nu-2,\mathcal{D}}(\Omega\setminus\Sigma)}\leq C$, and $(w_{1,\ve^\ell})$ is not bounded in $C^{2,\alpha}_\nu(\Omega\setminus\Sigma)$.  By Lemma \ref{7.2} $$\|w_{1,\bve^\ell}\|_{C^{2,\alpha}_{\nu}(\Omega_{\bve^\ell})}\leq C+C \max_{ \partial B_{\ve^\ell}(\xi_0)}   (\ve^\ell)^{-\nu}\left( |w_{1,\bve^\ell}|+\ve^\ell|\nabla w_{1,\bve^\ell}|\right) =:C+CS_{\bve^\ell}.$$ 

 We distinguish the following two cases. \\

\noindent\textbf{Case 1} $S_{\bve^\ell}\leq C$.

In this case we proceed as in the proof of Lemma \ref{inj-omega}. Let $x_\ell \in B_{\ve^\ell}(\xi_0)$ be such that $$\sup_{  B_{\ve^\ell}(\xi_0)}  \left(   \rho^{-\nu}|w_{1,\bve^\ell}| + \rho^{-\nu+1}|\nabla w_{1,\bve^\ell}| \right)\approx   |x_\ell-\xi_0|^{-\nu}\left(|w_{1,\bve^\ell}(x_\ell)|+|x_\ell-\xi_0| |\nabla w_{1,\bve^\ell}(x_\ell)|\right) =:S_\ell\to\infty.$$  Then necessarily $r_\ell :=|x_\ell -\xi_0|=o(\ve^\ell)$. Setting $$\tilde w_{1,\bve^\ell}( x):=\frac{r_\ell^{-\nu}w_{1,\bve^\ell}(r_\ell x +\xi_0)}{S_\ell}$$ one would get that $ \tilde w_{1,\bve^\ell}\to \tilde w_1\not\equiv 0$ where $$\tilde L_1\tilde w_1=0\quad\text{in }\R^n\setminus\{0\}, \quad r^{-\nu}|\tilde w_1|\leq C,\quad \tilde L_1:=\D+\frac{A_p}{r^2}, $$ where $A_p$ is as in \eqref{Ap}. Since $\nu$ does not coincide with indicial roots of $\tilde L_1$, from Proposition \ref{inj-prop-2} we  get  that  $\tilde w_1\equiv0$,   a contradiction. \\

\noindent\textbf{Case 2} $S_{\bve^\ell}\to\infty$.

In this case we set $$\tilde w_{1,\ve^\ell}(x)=(\ve^\ell)^{-\nu}\frac{w_{1,\ve^\ell}(\ve^\ell x+\xi_0)}{S_{\ve^\ell}}.$$ Then $\max_{\partial B_1}(|\tilde w_{1,\ve^\ell}|+| \nabla\tilde w_{1,\ve^\ell}|)\approx 1. $
Moreover, 
proceeding as before (see Lemma \ref{inj-omega}) we would get that  $w_{1,\bve^\ell}\to \tilde w_1\not\equiv 0$ where $$L_1\tilde w_1=0\quad\text{in }\R^n\setminus\{0\}, \quad r^{-\nu}|\tilde w_1|\leq C. $$ Since $\tilde w_1$ decays at infinity, its decay rate is determined by the indicial roots of $L_1$ (which are exactly the same as $\D$) at infinity. In fact, $\tilde w_1$ would be bounded by $r^{2-N}$ at infinity, see e.g., \cite{Mazzeo-Pacard96}.  

 Since $ w_{1,\bve^\ell}\in L^*_{\bve^\ell}[C^{4,\alpha}_{\nu+2,\mathcal{D}}(\Omega\setminus\Sigma)]$,  we have $w_{1,\bve^\ell}=\rho^{2-2\delta}L_{\bve^\ell} w_{2,\bve^\ell}$ for some $w_{2,\bve^\ell}\in C^{4,\alpha}_{\nu+2\delta,\mathcal{D}}(\Omega\setminus\Sigma)$.  Now we set $$\tilde w_{2,\bve^\ell}(x):=\frac{(\ve^{\ell})^{-\nu-2\delta}w_{2,\bve^\ell}(\ve^{\ell} x +\xi_0)}{S_{\ve^\ell}}. $$ Using that  $2\delta+\nu>\mu$,  and following the  proof of  Lemma  \ref{inj-omega},  one can show that the  family   $\tilde w_{2,\ve^\ell}$ converges to a limit function $\tilde w_2$, where $$ L_1\tilde w_2= |x|^{2\delta-2}\tilde w_1\quad\text{in }\R^n\setminus\{0\}, \quad |x|^{-\nu-2\delta}|\tilde w_2|\leq C.$$ Thus, $ L_1[r^{2-2\delta} L_1 \tilde w_2]=0$. We multiply this equation by $\tilde w_2$ and integrate it on $\R^N$. Then  an integration by parts leads   $ L_1 \tilde w_2=0$ (this is justified because of the decay of $\tilde w_1$ at infinity, provided we choose $\delta>0$ sufficiently close to $\mu+\frac{N-2}{2}$). Again,  as $2\delta+\nu>\mu$,  by Proposition \ref{inj-prop-1} we have   $\tilde w_2=\tilde w_1=0$, a contradiction.



\end{proof}

  \section{The non-linear term  $Q$}\label{con-q}

 \begin{lem}  Let $M_1>1$ be fixed. Then for $\ve_0<<1$ we have $$\|Q(v_1)-Q(v_2)\|_{0,\alpha,\nu-2}\leq\frac{1}{M_1}\|v_1-v_2\|_{2,\alpha,\nu}$$
 for every $v_1,\,v_2\in \mathcal {B}_{\ve,M}:= \left\{  v\in C^{2,\alpha}_{\nu} :\|v\|_{2,\alpha,\nu}\leq M\ve ^q\right\}.$  \end{lem}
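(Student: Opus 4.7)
The plan is to recast $Q(v_1)-Q(v_2)$ as a product in which $v_1-v_2$ is isolated, with a coefficient that is small in an appropriate weighted H\"older norm when $\ve$ is small, and then invoke Lemma~\ref{lem-properties} to convert these pointwise products into norm estimates.

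First, I would apply the fundamental theorem of calculus to the function $F(t)=|\bar u_\ve+t|^p$, which gives
\[
Q(v_1)-Q(v_2)=p\,a\,(v_1-v_2)\int_0^1\bigl[(\bar u_\ve+w_t)^{p-1}-\bar u_\ve^{p-1}\bigr]\,dt,
\qquad w_t:=v_2+t(v_1-v_2).
\]
A second application to $s\mapsto(\bar u_\ve+sw_t)^{p-1}$ produces
\[
Q(v_1)-Q(v_2)=p(p-1)\,a\,(v_1-v_2)\int_0^1\!\!\int_0^1 w_t(\bar u_\ve+sw_t)^{p-2}\,ds\,dt.
\]
Before differentiating under the integral, I would check that $\bar u_\ve+sw_t$ stays positive and comparable to $\bar u_\ve$. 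Since $v_i\in\mathcal B_{\ve,M}$ satisfies $|v_i(x)|\le M\ve^{q}\,\rho(x)^{\nu}$ for $\rho(x)=|x-\xi_0|\le\sigma$, while $\bar u_\ve(x)\gtrsim \rho(x)^{-2/(p-1)}$ in the singular regime $\rho\le\ve$, the ratio satisfies $|w_t|/\bar u_\ve\lesssim M\ve^q \rho^{\nu+2/(p-1)}$, which tends to $0$ uniformly as $\ve\to0$ because $\nu+\tfrac{2}{p-1}>0$. Away from $\xi_0$ the same ratio stays bounded by $CM\ve^{q}$. Hence $(\bar u_\ve+sw_t)^{p-2}\simeq \bar u_\ve^{p-2}$ pointwise for $\ve$ small, uniformly in $s,t\in[0,1]$.

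Next I would pass to weighted norms. The quantity $\bar u_\ve$ belongs to $C^{2,\alpha}_{-2/(p-1)}$ with norm bounded independently of $\ve$, and by item (iv) of Lemma~\ref{lem-properties} the same holds for $\bar u_\ve^{p-2}\in C^{0,\alpha}_{-2(p-2)/(p-1)}$. Applying item (iii) twice to the integrand, and absorbing the harmless factor $a\in C^{0,\alpha}_{0}$, one obtains
\[
\|Q(v_1)-Q(v_2)\|_{0,\alpha,\,2\nu-2+\frac{2}{p-1}}\ \le\ C\,\bigl(\|v_1\|_{2,\alpha,\nu}+\|v_2\|_{2,\alpha,\nu}\bigr)\,\|v_1-v_2\|_{2,\alpha,\nu}.
\]
Because $\nu>-\tfrac{2}{p-1}$, the exponent of the left-hand norm satisfies $2\nu-2+\tfrac{2}{p-1}>\nu-2$, and by the very definition of the weighted seminorms \eqref{seminorm} this gives the improvement
\[
\|Q(v_1)-Q(v_2)\|_{0,\alpha,\nu-2}\ \le\ C\sigma^{\,\nu+\frac{2}{p-1}}\bigl(\|v_1\|_{2,\alpha,\nu}+\|v_2\|_{2,\alpha,\nu}\bigr)\,\|v_1-v_2\|_{2,\alpha,\nu}.
\]
Since $\|v_i\|_{2,\alpha,\nu}\le M\ve^q$ on $\mathcal B_{\ve,M}$, the coefficient in front of $\|v_1-v_2\|_{2,\alpha,\nu}$ is at most $2CM\ve^q\,\sigma^{\nu+2/(p-1)}$, which becomes $<1/M_1$ once $\ve$ is taken sufficiently small depending on $M,M_1,\sigma$.

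The main technical obstacle is the book-keeping of the weights in the region near $\xi_0$ combined with the sign of $p-2$: when $p<2$ the factor $\bar u_\ve^{p-2}$ vanishes (not blows up) at $\xi_0$, and one has to interpret $(\bar u_\ve+sw_t)^{p-2}$ via item (iv) of Lemma~\ref{lem-properties} applied to the positive function $\bar u_\ve+sw_t$, using the ratio estimate above to relate it back to $\bar u_\ve^{p-2}$. Once the two-sided comparison $\bar u_\ve+sw_t\asymp\bar u_\ve$ is in hand, the product estimates of Lemma~\ref{lem-properties} propagate cleanly to Hölder seminorms and the contraction estimate follows as above.
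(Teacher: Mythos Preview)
There is a genuine gap in the comparison step. Your claim that $\bar u_\ve+sw_t\asymp\bar u_\ve$ on all of $\Omega\setminus\{\xi_0\}$ is false, and the argument collapses at precisely that point.

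Recall the two-scale behaviour of $\bar u_\ve$: for $\rho:=|x-\xi_0|\le\ve$ one has $\bar u_\ve\approx\rho^{-2/(p-1)}$, but for $\ve\le\rho\le\sigma/2$ one has $\bar u_\ve\approx\ve^{\,N-2-\frac{2}{p-1}}\rho^{2-N}$, and $\bar u_\ve\equiv0$ outside $B_\sigma(\xi_0)$. Hence for $\rho\ge\ve$ the ratio obeys
\[
\frac{|w_t|}{\bar u_\ve}\ \lesssim\ M\,\ve^{\,q-(N-\frac{2p}{p-1})}\,\rho^{\,\nu+N-2}.
\]
Since $q=\min\{N-\tfrac{2p}{p-1},\,1-\nu-\tfrac{2}{p-1}\}$, the exponent of $\ve$ here is \emph{nonpositive}; in the regime $q=1-\nu-\tfrac{2}{p-1}$ it equals $-(N+\nu-3)<0$ and the ratio blows up as $\ve\to0$. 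So ``away from $\xi_0$ the ratio stays bounded by $CM\ve^q$'' is incorrect, and there is no uniform lower bound $\bar u_\ve+sw_t\ge c\,\bar u_\ve$ outside an $\ve$-dependent neighbourhood of $\xi_0$. Without this, the second application of the fundamental theorem of calculus is not justified (for $p<2$ the function $r\mapsto r^{p-1}$ is not $C^1$ at $0$), and Lemma~\ref{lem-properties}\,(iv) cannot be invoked for $(\bar u_\ve+sw_t)^{p-2}$.

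This is exactly why the paper splits the domain at an intermediate radius $\tau_\ve$ (chosen so that $|v|\le\tfrac{1}{10}\bar u_\ve$ on $B_{\tau_\ve}(\xi_0)$). Inside $B_{\tau_\ve}$ the comparison holds and one estimates $|(\bar u_\ve+\cdots)^{p-1}-\bar u_\ve^{p-1}|\le C\bar u_\ve^{p-2}(|v_1|+|v_2|)$; on $\Omega\setminus B_{\tau_\ve}$ one abandons the comparison and uses the crude bound $|Q(v_1,v_2)|\le C(\bar u_\ve^{p-1}+|v_1|^{p-1}+|v_2|^{p-1})$, checking separately that each piece is $o_\ve(1)\rho^{-2}$. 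Your scheme becomes correct once you insert this decomposition and treat the outer region by the direct bound rather than by the two-sided comparison.
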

 \begin{proof} In Lemma \ref{error}, the error term $f_\bve$ is bounded  by the maximum of two terms. 
 If the maximum is the second term  $\ve^{N-2p/(p-1)}$, we argue as in   \cite{Mazzeo-Pacard96}. Let us consider the case when the maximum is the first term. Let $$ q_1:=\left(N-\frac{2p}{p-1}\right)-\left( 1-\nu-\frac{2}{p-1}  \right)=N+\nu-3>0.$$We start by showing that there exists $\tau>0$ small  (independent of $\ve<<1$) such that \begin{align}\label{est-tau-1}|v(x)|\leq \frac{1}{10}\bar u_\bve(x)\quad \text{for every }x\in   B_{\tau_\ve}(\xi_0),\, v\in \mathcal {B}_{\ve,M},\end{align} where  $$\tau_\ve:=\tau \ve^\frac{q_1}{\nu-2+N}\to0.$$ To prove this we recall that    there exists $c_1,\,c_2>1$ such that $$\frac{1}{c_1}\leq |x|^\frac{2}{p-1}u_{\ve}(x)\leq c_1\quad \text{for }|x|\leq \ve,$$ $$\frac{1}{c_2}\leq \ve^{-N+\frac{2p}{p-1}}|x|^{N-2}u_{\ve}(x)\leq c_2\quad\text{for }\ve\leq  |x|\leq\tau .$$ On the other hand, $$\ve^{-N+\frac{2p}{p-1}}  \rho(x) ^{-\nu} |v(x)|\leq M .$$ As $\nu>\frac{-2}{p-1}$, we have \eqref{est-tau-1} for some $\tau>0$ small. 
  
 We have  \begin{align*}  Q(v_1)-Q(v_2)&=a\int_0^1\frac{d}{dt}|\bar u_\bve +v_1+t(v_1-v_2) |^pdt -p\bar u_\bve^{p-1}(v_1-v_2)\\ &=ap(v_1-v_2)\int_0^1\left( |\bar u_\bve +v_1+t(v_1-v_2) |^{p-1}-\bar u_\bve^{p-1}\right)dt \\  &=: ap(v_1-v_2)\int_0^1Q(v_1,v_2)dt. \end{align*}Next,  using that $$ (1+r)^{p-1}=1+O(|r|)\quad \text{for }  |r|\leq\frac{1}{2},$$  
  we estimate for $ x\in  B_{\ve}(\xi_0)$  $$\begin{aligned}   |Q(v_1,v_2)|(x) & \leq C\bar u_\bve(x)^{p-1}\frac{|v_1|(x)+|v_2|(x)}{\bar u_\bve(x)}    \notag \\ &\leq CM \ve^{\frac{2}{p-1}+\nu+q} \rho^{-2}(x) \\ &=CM\ve \rho(x)^{-2} , \notag  \end{aligned}$$ and for  $ x\in  B_{\tau_\ve}(\xi_0)\setminus B_{\ve }(\xi_0)$  \begin{align*}  \notag |Q(v_1,v_2) |(x) & \leq C M \rho(x)^{-2} \max\{\ve,  \ve ^{(N-\frac{2p}{p-1})(p-2)+q}\tau_\ve^{(2-N)(p-2)+\nu+2} \}\\ &=CM\rho(x)^{-2}o_\ve(1)  . \end{align*}  Here we have used that the second term in the maximum  is  of the order $\ve^r$ for some $r>0$. Indeed,  from  the  definition of $\tau_\ve$, $q$ and $q_1$, the    exponent of $\ve$ is  \begin{align*} &\left(N-\frac{2p}{p-1}\right)(p-2)+q+[(2-N)(p-2)+\nu+2]\left(1-\frac{1}{N+\nu-2}\right) \\&=1-\frac{(2-N)(p-2)+\nu+2}{N+\nu-2} \\&=\frac{(N-2)(p-1)-2}{N+\nu-2} \\&>0,\end{align*} where the last inequality follows from $p>\frac{N}{N-2}$ and $N+\nu-2>0$. 
 Finally,  as $\nu>-\frac{2}{p-1}$, we easily obtain for $ x\in \Omega\setminus B_{\tau_\ve}(\xi_0)$  $$\begin{aligned}|Q(v_1,v_2)|(x) &\leq C(\bar u_\bve^{p-1}+|v_1|^{p-1}+|v_2|^{p-1} )(x)  \notag \\ &\leq  C \rho(x)^{-2} \left( \bar u_\bve^{p-1}(x)\rho(x)^2+ M\ve^{q(p-1)}\right)\\&=o_\ve(1)\rho(x)^{-2} . \notag \end{aligned} $$  Combining these estimates we  get   for $\ve<<1$ $$\|Q(v_1)-Q(v_2)\|_{0,0,\nu-2} =o_{\ve}(1)\|v_1-v_2\|_{0,0,\nu}=o_{\ve}(1)\|v_1-v_2\|_{2,\alpha,\nu}.$$  
  
Next we estimate the weighted H\"older norm of $Q(v_1)-Q(v_2)$   with  H\"older exponent $\alpha\leq p-1$.  
 For $0<s<\sigma$ we write \begin{align*} &s^{2-\nu+\alpha}\sup_{x,x'\in N_s\setminus N_\frac s2}\frac{|[Q(v_1)-Q(v_2)](x)-[Q(v_1)-Q(v_2)](x')|} {|x-x'|^{\alpha}} \\ &\leq 4 \|Q(v_1)-Q(v_2)\|_{0,0,\nu-2} \\&\quad +s^{2-\nu+\alpha}\sup_{x,x'\in N_s\setminus N_\frac s2,\, |x-x'|\leq\frac s4 }\frac{|[Q(v_1)-Q(v_2)](x)-[Q(v_1)-Q(v_2)](x')|} {|x-x'|^{\alpha}} . \end{align*} Notice that for $x,x'\in N_s\setminus N_\frac s2$ with $|x-x'|\leq \frac s4$, the line segment $[x,y]$ joining $x$ and $y$ lies in $N_{2s}\setminus N_\frac s4$. The desired estimate follows on the  ball   $ B_{\tau_\ve}(\xi_0)$  by estimating $Q(v_1,v_2)(x)-Q(v_1,v_2)(x')$ using the following  gradient bound (we are using that $|\bar u_\bve+v|^{p-1}$ is $C^1$ in this region) \begin{align*}\nabla Q(v_1,v_2)&=(p-1)\left[ (\bar u_\bve+v_1+t(v_1-v_2)^{p-2}-\bar u_\bve^{p-2}) \right]\nabla \bar u_\bve  \\ &\quad +  (p-1) (\bar u_\bve+v_1+t(v_1-v_2)^{p-2}\nabla[v_1+t(v_1-v_2)] \\ &=O(1)\bar u_\bve^{p-3}(|v_1|+|v_2|)|\nabla u_\bve|+O(1) \bar u_\bve^{p-2}(|\nabla v_1|+|\nabla v_2|).\end{align*}  In fact,  gradient bounds can  also be used for the region $  B_{\tau_\ve}(\xi_0)$ if $p\geq 2$. For $1<p\leq 2$,   one can use the following inequalty  $$||\phi|^{p-1}(x)-|\phi|^{p-1}(x')|\leq |\phi(x)-\phi(x')|^{p-1}\leq \|\nabla \phi\|^{p-1}_{C^0([x,x'])}|x-x'|^{p-1},$$ with $\phi=\bar u_\bve$ and $\phi=\bar u_\bve+v_1+t(v_1-v_2)$.   

We conclude the lemma. 

 \end{proof}
 
 
   \section{Appendix}\label{app}
  
  The following lemma can be proven in the spirit of \cite[Proposition 1.5.1]{pacard}
  
  \begin{lem}  \label{lem-7.1}  For   $ d \in\R$ set   
 \begin{align}\label{deltaj}
 \delta_j:=\Re\left( \left(\frac{N-2}{2}\right)^2+\lambda_j-d\right)^\frac12,\quad j\in\N. 
  \end{align} 
  Then for $\delta\in\R\setminus\{\pm \delta_j:j=0,1,\dots\}$ there exists $C=C(N,\delta)$ such that  if $u$ is a solution to $$\D u+\frac{d}{|x|^2} u=f\quad\text{in }B_1\setminus \{0\},$$  then $$\|u\|_{L^2_\delta(B_1)}\leq C(\|f\|_{L^2_{\delta-2}(B_1)}+\|u\|_{L^2(B_1\setminus B_\frac12)}).$$\end{lem}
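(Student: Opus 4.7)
The plan is to prove the lemma by a Fourier decomposition in spherical harmonics, which reduces the PDE to a family of Cauchy--Euler ODEs. The weight $\delta$ is admissible precisely when it misses the indicial roots, and this must be converted into a uniform-in-$j$ resolvent estimate.

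First I would decompose $u(r,\theta)=\sum_{j\ge 0}u_j(r)\varphi_j(\theta)$ and $f(r,\theta)=\sum_{j\ge 0}f_j(r)\varphi_j(\theta)$ in the $L^2(S^{N-1})$ basis of eigenfunctions of $-\Delta_\theta$. The equation decouples to the sequence of ODEs
\begin{equation*}
 u_j''(r)+\frac{N-1}{r}u_j'(r)+\frac{d-\lambda_j}{r^2}u_j(r)=f_j(r),\qquad 0<r<1.
\end{equation*}
Since the operator is of Euler type, the homogeneous solutions are $r^{\alpha_j^\pm}$ with $\alpha_j^\pm=-\tfrac{N-2}{2}\pm\delta_j$, exactly the indicial roots appearing in \eqref{deltaj}. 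The natural substitution is $t=\log r$ together with $w_j(t)=e^{(N-2)t/2}\,u_j(e^t)$, which removes the first-order term and turns the ODE into the constant-coefficient equation
\begin{equation*}
 w_j''(t)-\delta_j^2\,w_j(t)=e^{(N+2)t/2}f_j(e^t)=:g_j(t),\qquad t<0.
\end{equation*}
Under this change of variables the weighted space $L^2_\delta(B_1)$ corresponds (after summing over spherical modes) to the space of sequences with $\sum_j\int_{-\infty}^0 e^{-2(\delta+1-(N-2)/2)t}|w_j(t)|^2\,dt<\infty$, while $L^2_{\delta-2}$ corresponds to $\sum_j\int e^{-2(\delta-1-(N-2)/2)t}|g_j(t)|^2\,dt<\infty$, and $L^2(B_1\setminus B_{1/2})$ controls $w_j$ on a bounded $t$-interval.

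Next, I would establish a one-dimensional resolvent estimate for $(\partial_t^2-\delta_j^2)w_j=g_j$ on $(-\infty,0)$ in the exponential weight $e^{-\beta t}$ with $\beta=\delta+1-\tfrac{N-2}{2}$, plus a boundary control on $t\in(-\log 2,0)$. Using variation of parameters with the Green's kernel built from $e^{\pm\delta_j t}$, one obtains
\begin{equation*}
 \|e^{-\beta t}w_j\|_{L^2(-\infty,0)}\le C_j\bigl(\|e^{-(\beta-1) t}g_j\|_{L^2(-\infty,0)}+\|w_j\|_{L^2(-\log 2,0)}\bigr),
\end{equation*}
where $C_j$ depends on $\operatorname{dist}(\beta,\{\pm\delta_j\})$; this is precisely the condition $\delta\notin\{\pm\delta_j\}$ translated back. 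Because $\lambda_j\to\infty$, we have $\delta_j\to\infty$, so for all sufficiently large $j$ the distance $|\beta\pm\delta_j|$ is bounded below uniformly, giving $C_j\le C$. For the finitely many remaining low modes, the hypothesis $\delta\neq\pm\delta_j$ rules out resonance and yields a finite constant.

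Finally, summing the squared mode estimates, using Parseval both on $S^{N-1}$ and the passage between the $r$-measure $r^{N-1}dr$ and the $t$-measure $dt$, and translating back through the substitutions $w_j(t)=e^{(N-2)t/2}u_j(e^t)$ and $g_j(t)=e^{(N+2)t/2}f_j(e^t)$, yields the desired inequality
\begin{equation*}
 \|u\|_{L^2_\delta(B_1)}\le C\bigl(\|f\|_{L^2_{\delta-2}(B_1)}+\|u\|_{L^2(B_1\setminus B_{1/2})}\bigr).
\end{equation*}
The main obstacle is step two: securing the \emph{uniform-in-$j$} constant in the one-dimensional weighted estimate. This requires careful bookkeeping of the Green's function for $\partial_t^2-\delta_j^2$ in the weight $e^{-\beta t}$ to see that the constant is $O(1/\operatorname{dist}(\beta,\{\pm\delta_j\}))$, together with the observation that only finitely many indicial roots can come close to the chosen $\beta$. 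Once this is in hand, summing modes and unwinding the change of variables is routine.
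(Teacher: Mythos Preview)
Your approach is correct and matches what the paper does, which is simply to cite \cite[Proposition~1.5.1]{pacard}: spherical-harmonic decomposition, the Emden--Fowler substitution $t=\log r$ reducing each mode to $w_j''-\delta_j^2 w_j=g_j$, and a mode-wise weighted $L^2$ estimate whose constant is controlled by $\operatorname{dist}(\beta,\{\pm\delta_j\})$ and is therefore uniform for large $j$. One arithmetic slip to fix: with the paper's convention $\|u\|_{L^2_\delta}^2=\int\rho^{-2-2\delta}|u|^2\,dx$ and your substitutions $w_j(t)=e^{(N-2)t/2}u_j(e^t)$, $g_j(t)=e^{(N+2)t/2}f_j(e^t)$, both $\|u\|_{L^2_\delta}^2$ and $\|f\|_{L^2_{\delta-2}}^2$ become $\sum_j\int_{-\infty}^0 e^{-2\delta t}|\cdot|^2\,dt$, so the correct weight is $\beta=\delta$ (not $\delta+1-\tfrac{N-2}{2}$), which is exactly what makes the non-resonance condition read $\delta\neq\pm\delta_j$ as in the statement.
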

 
\begin{lem}\label{lem-crucial} Let $\zeta$ be a continuous  function in $\bar B_1$.  Let $\delta_j$ be given by \eqref{deltaj} with $d=\zeta(0)$. Then for $\delta\in\R\setminus\{\pm \delta_j:j=0,1,\dots\}$ there exists  a compact set $K\subset\bar B_1\setminus\{0\}$ and a constant $C>0$ such that  for every $u\in L^2_{\delta}(B_1)$  solving
$$\D u+\frac{\zeta}{|x|^2}u=f\quad\text{in }B_1\setminus \{0\}, \quad f\in L^2_{\delta}(B_1),  
$$
  we have   $$\|u\|_{L^2_\delta(B_1)}\leq C(\|f\|_{L^2_{\delta-2}(B_1)} +\|u\|_{L^2( K)}). $$\end{lem}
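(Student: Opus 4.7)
The plan is to perturb off Lemma \ref{lem-7.1} using the continuity of $\zeta$ at the origin. Setting $d := \zeta(0)$, I recast the equation as
\[
\Delta u + \frac{d}{|x|^2} u = f - \frac{\zeta(x) - d}{|x|^2} u =: g \quad\text{in } B_1 \setminus \{0\},
\]
and note that the indicial roots $\pm \delta_j$ excluded in the hypothesis are exactly those attached to $d$, so Lemma \ref{lem-7.1} applies to this frozen equation with the same $\delta$.

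First, I apply Lemma \ref{lem-7.1} on a small ball $B_r \subset B_1$, to be fixed below. A direct scaling check shows that under the dilation $u(x) = v(x/r)$, the squared norms $\|u\|_{L^2_\delta(B_r)}^2$ and $\|g\|_{L^2_{\delta-2}(B_r)}^2$ both pick up the \emph{same} factor $r^{2+2\delta-N}$, so the estimate transfers with a constant $C_0 = C_0(N,\delta)$ independent of $r$, modulo a harmless $r$-dependent factor in front of the annular $L^2$ term:
\[
\|u\|_{L^2_\delta(B_r)} \leq C_0 \|g\|_{L^2_{\delta-2}(B_r)} + C_0 r^{-1-\delta} \|u\|_{L^2(B_r \setminus B_{r/2})}.
\]
The perturbation is controlled by the matching weights in $L^2_{\delta-2}$:
\[
\biggl\| \frac{\zeta - d}{|x|^2} u \biggr\|_{L^2_{\delta-2}(B_r)}^2 = \int_{B_r} \rho^{-2-2\delta} |\zeta - d|^2 |u|^2 \, dx \leq \bigl(\sup_{B_r} |\zeta - d|\bigr)^2 \|u\|_{L^2_\delta(B_r)}^2.
\]
By continuity of $\zeta$ at $0$, I can fix $r \in (0,1)$ so small that $C_0 \sup_{B_r}|\zeta - d| \leq 1/2$, and absorb this contribution on the left, obtaining
\[
\|u\|_{L^2_\delta(B_r)} \leq 2 C_0 \|f\|_{L^2_{\delta-2}(B_r)} + 2 C_0 r^{-1-\delta} \|u\|_{L^2(B_r \setminus B_{r/2})}.
\]

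On the complementary annulus $B_1 \setminus B_r$, the weight $\rho^{-2-2\delta}$ is bounded between positive constants depending only on $r$ and $\delta$, so $\|u\|_{L^2_\delta(B_1 \setminus B_r)}$ is equivalent to $\|u\|_{L^2(B_1 \setminus B_r)}$. Setting $K := \{x \in \bar B_1 : r/2 \leq |x| \leq 1\}$, which is a compact subset of $\bar B_1 \setminus \{0\}$ containing both annuli, and adding the two bounds yields the desired estimate
\[
\|u\|_{L^2_\delta(B_1)} \leq C \bigl(\|f\|_{L^2_{\delta-2}(B_1)} + \|u\|_{L^2(K)}\bigr),
\]
with $C$ depending on $N$, $\delta$, and the modulus of continuity of $\zeta$ at $0$.

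The main obstacle is ensuring the $r$-independence of the constant $C_0$ in the rescaled version of Lemma \ref{lem-7.1}: if this constant blew up as $r \to 0$, the absorption of the perturbation would fail. The scaling identity above, in which the weighted $L^2_\delta$ and $L^2_{\delta-2}$ norms dilate by the same factor, is precisely what makes the argument succeed; it is the reason the two exponents appearing in the weighted spaces differ by $2$. A secondary point to watch is that $\delta_j$ in the hypothesis is defined through the \emph{frozen} coefficient $d = \zeta(0)$ rather than through $\zeta$ itself, which is exactly what is needed to invoke Lemma \ref{lem-7.1}.
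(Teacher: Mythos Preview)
Your proof is correct and follows essentially the same strategy as the paper: freeze the coefficient at $\zeta(0)$, invoke Lemma~\ref{lem-7.1} for the model operator, and absorb the perturbation $\frac{\zeta-\zeta(0)}{|x|^2}u$ near the origin using continuity. The only cosmetic difference is that the paper applies Lemma~\ref{lem-7.1} directly on $B_1$ and then splits the $\tilde f$ term into a small-ball piece (absorbed) and an annular piece (sent into $K$), whereas you rescale Lemma~\ref{lem-7.1} to $B_r$ and handle the outer annulus by weight equivalence; your scaling check that $C_0$ is $r$-independent is correct and is what makes this variant work.
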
 
\begin{proof} We rewrite the equation as $$\D u+\frac{\zeta(0)}{|x|^2}u=f+\tilde f,\quad \tilde f:=\frac{\zeta(0)-\zeta}{|x|^2}u.$$ Then by Lemma \ref{lem-7.1} we get \begin{align*} \|u\|_{L^2_\delta(B_1)}\leq C_1(\|f\|_{L^2_{\delta-2}(B_1)}+\|\tilde f\|_{L^2_{\delta-2}(B_1)}+\|u\|_{L^2(B_1\setminus B_\frac12)}). \end{align*} Let $r>0$ be sufficiently small so  that $|\zeta-\zeta(0)|\leq\frac{1}{2C_1}$ on $B_r$. Then \begin{align*} \|\tilde f\|_{L^2_{\delta-2}(B_1)}\leq \frac{1}{2C_1}\|u\|_{L^2_{\delta}(B_r)}+C(r,\|\zeta\|_{L^\infty(B_1)})\|u\|_{L^2(B_1\setminus B_r)}. \end{align*} The proof follows by absorbing the term $\|u\|_{L^2_{\delta}(B_r)}$ on the left hand side, and taking $K=\bar B_1\setminus B_r$.

 
 \end{proof}

\begin{lem}[$L^2$ estimate] \label{L2}Let $\Omega$ be a bounded open set in $\R^n$. Let $b_i\in L^\infty(\Omega)$ with $$\|b_i\|_{L^\infty(\Omega)}\leq \Lambda,\quad  i=0,1,\dots,n. $$ Let $u\in L^2(\Omega)$ be a weak solution solution to $$\D u+\sum_{i=1}^n b_i\frac{\partial u}{\partial x_i}+b_0 u=f\quad\text{in }\Omega,$$ for some $f\in L^2(\Omega)$. Then for every $\tilde \Omega\Subset\Omega$ there exists $C=C(\tilde\Omega,\Lambda)$ such that $$\|u\|_{W^{2,2}(\tilde\Omega)}\leq C(\|f\|_{L^2(\Omega)}+\|u\|_{L^2(\Omega)}).$$  \end{lem}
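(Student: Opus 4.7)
This is the classical interior $W^{2,2}$-regularity estimate for a second-order elliptic operator with constant principal part $\Delta$ and $L^\infty$ lower-order coefficients; essentially Gilbarg--Trudinger Theorem~8.8 specialized to $a^{ij}=\delta^{ij}$. My plan is the standard Caccioppoli~$+$~Nirenberg-difference-quotient scheme, preceded by a bootstrap from $L^2$ to $H^1_{\loc}$ to make the energy manipulations rigorous.

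\emph{Localization and $H^1$ bootstrap.} Fix nested open sets $\tilde\Omega\Subset\Omega_2\Subset\Omega_1\Subset\Omega$ with cutoffs $\eta_i\in C_c^\infty(\Omega_i)$, $\eta_i\equiv 1$ on $\Omega_{i+1}$. Since $u$ is a priori only $L^2$, first mollify: $u_\delta:=u\ast\rho_\delta$ is smooth on $\Omega_1$ (for $\delta$ small) and satisfies $\Delta u_\delta+\sum_i b_i\partial_i u_\delta + b_0 u_\delta = f_\delta + R_\delta$, where the commutator error $R_\delta\to 0$ in $L^2_{\loc}$ by a Friedrichs-type estimate. Testing the smoothed equation against $\eta_1^2 u_\delta$ and integrating by parts produces $\int\eta_1^2|\nabla u_\delta|^2$ on the left; applying Cauchy--Schwarz and Young's inequality $2|ab|\leq\varepsilon a^2+\varepsilon^{-1}b^2$ to each remaining term (using $|b_i|,|b_0|\leq\Lambda$) and absorbing the $\varepsilon$-fraction of $\int\eta_1^2|\nabla u_\delta|^2$ onto the left yields, after sending $\delta\to 0$, that $u\in H^1(\Omega_2)$ with $\|u\|_{H^1(\Omega_2)}\lesssim \|u\|_{L^2(\Omega)}+\|f\|_{L^2(\Omega)}$.

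\emph{$W^{2,2}$ estimate by Nirenberg.} For each coordinate direction $e_k$ and small $h\neq 0$, test the weak equation on $\Omega_1$ against $\varphi_h:=-\tau_{-h}^k(\eta_2^2\,\tau_h^k u)$, where $\tau_h^k g(x):=(g(x+he_k)-g(x))/h$. The discrete integration-by-parts identity $\int g\,\tau_h^k v\,dx = \int(\tau_{-h}^k g)\,v\,dx$ shifts the difference quotient from $\varphi_h$ onto $\nabla u$ in the Laplacian term, producing
$$\int\eta_2^2|\nabla\tau_h^k u|^2+2\!\int\eta_2\,\tau_h^k u\,\nabla\eta_2\cdot\nabla\tau_h^k u=\int\Bigl(\sum_i b_i\partial_i u+b_0 u-f\Bigr)\,\varphi_h.$$
Since $\|\varphi_h\|_{L^2(\Omega_1)}\lesssim \|\nabla(\eta_2^2\tau_h^k u)\|_{L^2(\Omega_1)}$, the right-hand side is controlled by $\varepsilon\int\eta_2^2|\nabla\tau_h^k u|^2 + C_\varepsilon(\|u\|_{H^1(\Omega_2)}^2+\|f\|_{L^2(\Omega)}^2)$; absorbing the $\varepsilon$-term and letting $h\to 0$ yields $\partial_k u\in H^1(\tilde\Omega)$ for each $k$, hence the claimed $W^{2,2}$ bound.

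The main technical subtlety is the initial $L^2\to H^1$ bootstrap, since the drift $\sum_i b_i\partial_i u$ is not classically defined when $u$ is only $L^2$ and $b_i\in L^\infty$; the Friedrichs mollification sidesteps this by working on the smooth approximants. Once $u\in H^1_{\loc}$ is secured, the Nirenberg step proceeds routinely---critically, the coefficients $b_i$ are never differenced (they are only multiplied against the difference quotient of $u$), so the bare $L^\infty$ assumption suffices throughout.
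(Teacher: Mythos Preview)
The paper records this lemma without proof as a standard interior estimate (essentially Gilbarg--Trudinger, Theorem~8.8). Your Caccioppoli-plus-Nirenberg scheme is the textbook argument, and the difference-quotient passage from $H^1_{\loc}$ to $W^{2,2}_{\loc}$ is correct.

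The $L^2\to H^1_{\loc}$ bootstrap, however, does not go through as written. The Friedrichs-type commutator estimate you invoke for the drift requires $b_i\in W^{1,1}_{\loc}$ (or Lipschitz), not merely $L^\infty$; with only $u\in L^2$ and $b_i\in L^\infty$, the expression $b_i\,\partial_i u$ is not even defined as a distribution, so there is nothing to mollify and your remainder $R_\delta$ has no meaning. Mollification does not sidestep this issue---it presupposes that the drift term already makes sense. This is less a flaw in your proof than a looseness in the lemma's hypotheses: ``$u\in L^2$ is a weak solution'' should be read as $u\in H^1_{\loc}(\Omega)$, which is the standard assumption in Gilbarg--Trudinger and is satisfied wherever the paper actually applies the lemma. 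Under that reading your first step is unnecessary and the remainder is complete.
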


\begin{lem}[Schauder estimate]\label{Schauder} Let $\Omega$ be a  bounded open set in $\R^n$. Let $b_i\in C^{0,\alpha}(\Omega)$ with $$\|b_i\|_{C^{0,\alpha}(\Omega)}\leq \Lambda,\quad  i=0,1,\dots,n. $$ Let $u$ be a classical solution to $$\D u+\sum_{i=1}^n b_i\frac{\partial u}{\partial x_i}+b_0 u=f\quad\text{in }\Omega,$$ for some $f\in C^{0,\alpha}(\Omega)$.  Then for every $\tilde \Omega\Subset\Omega$ there exists $C=C(\tilde\Omega,\Lambda)$ such that $$\|u\|_{C^{2,\alpha}(\tilde\Omega)}\leq C(\|f\|_{C^{0,\alpha}(\Omega)}+\|u\|_{C^0(\Omega)}).$$ Additionally,  if  $\Omega$ is regular,   $\partial \Omega$ has two components $\Gamma_1$ and $\Gamma_2$,  and if $u=0$ on $\Gamma_1$ then for $\tilde \Omega\Subset(\Omega\cup\Gamma_1)$ there exists $C=C(\tilde\Omega,\Lambda)$ such that $$\|u\|_{C^{2,\alpha}(\tilde\Omega)}\leq C(\|f\|_{C^{0,\alpha}(\Omega)}+\|u\|_{C^0(\Omega)}).$$   \end{lem}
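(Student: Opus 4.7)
The plan is to derive both estimates from the classical Schauder theory for the Laplacian (see e.g.\ Gilbarg--Trudinger, Chapter~6) by treating the lower-order terms as a perturbation. I will rewrite the equation as $\D u = g$ with
\[
g := f - \sum_{i=1}^n b_i \frac{\partial u}{\partial x_i} - b_0 u,
\]
and choose nested domains $\tilde\Omega \Subset \Omega_1 \Subset \Omega_2 \Subset \Omega$. The classical interior Schauder estimate for $\D$ gives
\[
\|u\|_{C^{2,\alpha}(\tilde\Omega)} \le C_1\bigl(\|g\|_{C^{0,\alpha}(\Omega_1)} + \|u\|_{C^0(\Omega_1)}\bigr),
\]
with $C_1$ depending only on $\tilde\Omega$ and $\Omega_1$. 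The H\"older product rule combined with $\|b_i\|_{C^{0,\alpha}(\Omega)}\le\Lambda$ then yields
\[
\|g\|_{C^{0,\alpha}(\Omega_1)} \le \|f\|_{C^{0,\alpha}(\Omega)} + C(\Lambda)\,\|u\|_{C^{1,\alpha}(\Omega_1)}.
\]

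The crucial step will be the standard interpolation inequality
\[
\|u\|_{C^{1,\alpha}(\Omega_1)} \le \varepsilon\,\|u\|_{C^{2,\alpha}(\Omega_2)} + C(\varepsilon)\,\|u\|_{C^0(\Omega_2)}, \qquad \varepsilon>0.
\]
Applying the interior estimate also to the pair $(\Omega_1,\Omega_2)$ and iterating along a finite chain of nested domains, I will choose $\varepsilon$ small enough (depending on $\Lambda$ and $C_1$) so that the $C^{2,\alpha}$ term on the right-hand side can be absorbed into the left. This produces the interior bound with constant depending only on $\tilde\Omega$ and $\Lambda$.

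For the boundary version the only additional work is near points of $\Gamma_1$. I will locally straighten $\Gamma_1$ via a $C^{2,\alpha}$ diffeomorphism into a piece of a hyperplane, so that a neighborhood of each boundary point becomes a half-ball; the transformed equation retains the form ``Laplacian plus first and zeroth order terms equals a $C^{0,\alpha}$ function'', with new coefficients still bounded in $C^{0,\alpha}$ by a constant times $\Lambda$, and with $u=0$ on the flat portion of the boundary. Then I apply the classical boundary Schauder estimate for the half-ball with zero Dirichlet data, and combine it with the interior estimate through a finite covering of $\tilde\Omega$ by interior balls (where the interior estimate applies) and boundary half-balls near $\Gamma_1$; because $\tilde\Omega\Subset \Omega\cup\Gamma_1$ sits at positive distance from $\Gamma_2$, no estimate is needed there.

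The only genuine obstacle is the absorption of the $\|u\|_{C^{1,\alpha}}$ contribution coming from the first-order perturbation. This is handled entirely by the interpolation inequality together with the freedom to choose $\varepsilon$ arbitrarily small; every other step reduces to a textbook application of the constant-coefficient Schauder estimates and a standard covering/partition-of-unity argument.
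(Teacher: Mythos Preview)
The paper does not prove this lemma: it is stated in the Appendix as a classical result, alongside the $L^2$ estimate (Lemma~\ref{L2}), with no argument supplied. So there is no ``paper's proof'' to compare against; the authors simply quote the standard Schauder theory.

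Your outline is the standard textbook route and is essentially correct. One point deserves tightening: in your absorption step, the interpolation produces $\varepsilon\|u\|_{C^{2,\alpha}(\Omega_2)}$ on the \emph{larger} domain $\Omega_2$, while the left-hand side is $\|u\|_{C^{2,\alpha}(\tilde\Omega)}$ on the \emph{smaller} one, so a naive absorption is not possible and ``iterating along a finite chain of nested domains'' pushes the problem outward rather than closing it. The clean fixes are either (i) to quote Gilbarg--Trudinger, Theorem~6.2, which already handles variable $C^{0,\alpha}$ lower-order coefficients directly via distance-weighted interior norms, or (ii) to run the usual iteration lemma on concentric balls $B_r\subset B_R$ (as in Giaquinta), obtaining an inequality of the form $\Phi(r)\le \tfrac12\Phi(R)+C(R-r)^{-\beta}$ for $\Phi(r)=[D^2u]_{\alpha;B_r}$, which does close because $\Phi$ is a priori finite for a classical solution. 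With that adjustment your interior argument is fine, and your boundary argument (local flattening of $\Gamma_1$, half-ball Schauder with zero Dirichlet data, finite covering staying away from $\Gamma_2$) is exactly the standard one.
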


\bibliographystyle{alpha} 
\bibliography{mybibfile}

 \end{document}